\numberwithin{equation}{section}
\theoremstyle{plain}
\newtheorem{theorem}{Theorem}[section]
\newtheorem{proposition}{Proposition}[section]
\newtheorem{lemma}{Lemma}[section]
\newtheorem{corollary}{Corollary}[section]
\newtheorem{claim}{Claim}[section]
\theoremstyle{definition}
\newtheorem{remark}{Remark}[section]
\def\now{%
\minute=\time%
\hour=\time \divide \hour by 60%
\hourMins=\hour \multiply\hourMins by 60%
\advance\minute by -\hourMins%
\zeroPadTwo{\the\hour}:\zeroPadTwo{\the\minute}%
}
\def\zeroPadTwo#1{\ifnum #1<10 0\fi#1}
\renewcommand{\cite}{\citet}
\def\^#1{\ifmmode {\mathaccent"705E #1} \else {\accent94 #1} \fi}
\def\~#1{\ifmmode {\mathaccent"707E #1} \else {\accent"7E #1} \fi}
\def\*#1{#1^\ast}
\edef\-#1{\noexpand\ifmmode {\noexpand\bar{#1}} \noexpand\else \-#1\noexpand\fi}
\def\>#1{\vec{#1}}
\def\.#1{\dot{#1}}
\def\atop{\@@atop}
\def\*#1{\mathscr{#1}}
\renewcommand{\leq}{\leqslant}
\renewcommand{\geq}{\geqslant}
\newcommand{\eq}{\eqref}
\newcommand{\IE}{\mathbbm{E}}
\def\be#1{\begin{equation*}#1\end{equation*}}
\def\ben#1{\begin{equation}#1\end{equation}}
\def\bes#1{\begin{equation*}\begin{split}#1\end{split}\end{equation*}}
\def\besn#1{\begin{equation}\begin{split}#1\end{split}\end{equation}}
\def\beqn#1\eeqn{\begin{align}#1\end{align}}
\def\beq#1\eeq{\begin{align*}#1\end{align*}}
\DeclareMathOperator*{\argsup}{argsup}
\def\E{{\IE}}
\renewcommand\section{\@startsection {section}{1}{\z@}%
{-3.5ex \@plus -1ex \@minus -.2ex}%
{1.3ex \@plus.2ex}%
{\center\small\sc\mathversion{bold}\MakeUppercase}}
\def\subsection#1{\@startsection {subsection}{2}{0pt}%
{-3.5ex \@plus -1ex \@minus -.2ex}%
{1ex \@plus.2ex}%
{\bf\mathversion{bold}}{#1}}
\def\subsubsection#1{\@startsection{subsubsection}{3}{0pt}%
{\medskipamount}%
{-10pt}%
{\normalsize\itshape}{\kern-2.2ex. #1.}}
\def\blfootnote{\xdef\@thefnmark{}\@footnotetext}
\begin{document}

\title{Limit theorems with rate of convergence under sublinear expectations}
\author{Xiao Fang$^*$, Shige Peng$^\dagger$, Qi-Man Shao$^*$, Yongsheng Song$^\ddagger$}
\date{\it The Chinese University of Hong Kong$^*$, Shandong University$^\dagger$, Chinese Academy of Sciences$^\ddagger$} 
\maketitle

\noindent{\bf Abstract:} Under the sublinear expectation $\mathbbm{E}[\cdot]:=\sup_{\theta\in \Theta} E_\theta[\cdot]$ for a given set of linear expectations $\{E_\theta: \theta\in \Theta\}$, we establish a new law of large numbers and a new central limit theorem with rate of convergence. We present some interesting special cases and discuss a related statistical inference problem. We also give an approximation and a representation of the $G$-normal distribution, which was used as the limit in Peng (2007)'s central limit theorem, in a probability space.

\medskip

\noindent{\bf AMS 2010 subject classification:} 60F05.

\noindent{\bf Keywords and phrases:} sublinear expectation, law of large numbers, central limit theorem, $G$-normal distribution, rate of convergence, Stein's method.



\section{Introduction}

Let $\{P_\theta: \theta\in \Theta\}$ be a set of probability measures on a measurable space $(\Omega, \mathcal{F})$.
Let $E_\theta$ denote the expectation under $P_\theta$.
For a random variable $X: \Omega \to \mathbb{R}$ such that $E_\theta[X]$ exists for all $\theta \in \Theta$, we define its {\it sublinear} expectation as
\ben{\label{00}
\E[X]:=\sup_{\theta\in \Theta} E_\theta[X].
}
It is clear that the sublinear expectation \eq{00} satisfies the following: (i) {\it monotonicity} ($\E[X]\geq \E[Y]$ if $X\geq Y$), (ii) {\it constant preservation} ($\E[c]=c$ for $c\in \mathbb{R}$), (iii) {\it sub-additivity} ($\E[X+Y]\leq \E[X]+\E[Y]$), and (iv) {\it positive homogeneity} ($\E[\lambda X]=\lambda \E[X]$ for $\lambda\geq 0$).

From (iii), we have
\ben{\label{103}
\E[X]-\E[-Y]\leq \E[X+Y]\leq \E[X]+\E[Y].
}
In the special case where $Y$ does not have the mean uncertainty, that is, 
\ben{\label{33}
\E[Y]=-\E[-Y], \ \text{we have}\ \E[X+Y]=\E[X]+\E[Y].
}
From \eq{103} and (i), we have
\ben{\label{101}
\big|\E[X+Y]-\E[X]\big|\leq \E[|Y|].
}

Such a notion of sublinear expectation is often  used in situations where it is difficult or impossible to find the ``true'' probablity $P_{\hat{\theta}}$ among a set of uncertain probability models 
 $\{P_{\theta}\}_{\theta\in\Theta}$.
To the best of our knowledge, the definition \eq{00} of sublinear expectation first appeared in \cite{Hu81}, who called it the upper expectation. It was also called the upper prevision in the theory of imprecise probabilities. See, for example, \cite{Wa91}.  A type of nonlinear expectation adapted with a Brownian filtration, called $g$-expectation, was defined in \cite{p97}.  The sublinear situation of $g$-expectation 
was applied in \cite{Chen-Epstein02} to describe the investors' ambiguity aversions.  The notion of coherent risk measures introduced in \cite{ArDe99} is also a type of sublinear expectation. See also \cite{FoSc11}. The motivation for these related notions is to use the set of probability measures $\{P_\theta: \theta\in \Theta\}$ to model the uncertainty of probabilities and distributions in real data, and use the sublinear expectation $\E$ as a robust method to measure the risk loss $X$. We also refer to \cite{DPR2010} and  \cite{Pe10}  for more information on sublinear expectations, dynamical risk measures and general nonlinear expectations.

According to \cite{Pe07}, we say two random variables $X$ and $Y$ are {\it identically distributed}, denoted by $X\overset{d}{=}Y$, if
\be{
\E[\varphi(X)]=\E[\varphi(Y)]
}
for all bounded continuous functions $\varphi$.  $X\overset{d}{=}Y$ means that the distribution uncertainties of $X$ and $Y$ are the same.
There are different notions of independence under sublinear or nonlinear expectations. 
See, for example, \cite{Wa91}, \cite{Ma99} and \cite{MaMa05}.
We adopt the notion introduced by \cite{Pe10}
and say that a random vector $Y\in \mathbb{R}^n$ is {\it independent} of another random vector $X\in \mathbb{R}^m$ if
\be{
\E[\varphi(X, Y)]=\E\Big\{ \big\{\E[\varphi(x, Y)]\big\}_{x=X}\Big\}
}
for all bounded continuous functions $\varphi: \mathbb{R}^{m+n}\to \mathbb{R}$.  This independence often occurs in many situations 
where  the value of $X$ is realized before that of $Y$, but the distribution uncertainty of $Y$ does not change after this realization. 
A sequence of random variables $\{X_i\}_{i=1}^{\infty}$ is said to be {\it i.i.d.}\ if for each $i=1,2,\dots,$ $X_{i+1}$ is identically distributed as $X_1$ and independent of $(X_1,\dots, X_i)$. 
Under sublinear expectations, ``$Y$ is independent of $X$" does not imply automatically that ``$X$ is independent of $Y$".
Example 3.13 of \cite{Pe10} provides such an example.
In the special case that $\Theta$ is a singleton, \eq{00} reduces to the usual definition of expectation, and the definition of i.i.d.\  random variables reduces to that in the classical setting. 

\cite{Pe10} formulated a law of large numbers (LLN) under the sublinear expectation \eq{00} as follows. 
Let $\{X_i\}_{i=1}^\infty$ be an i.i.d.\  sequence of random variables with $\E[X_1]=\overline{\mu},$ $-\E[-X_1]=\underline{\mu}$, both being finite. 
By the definition of sublinear expectation \eq{00}, we have $\underline{\mu}\leq \overline{\mu}$.
Let $\overline{X}_n=(X_1+\dots+ X_n)/n$. Then, we have
\ben{\label{001}
\text{(LLN)}\quad \lim_{n\to \infty} \E[\varphi(\overline{X}_n)]\to \sup_{\underline{\mu}\leq y\leq \overline{\mu}} \varphi(y), \ \text{for} \ \varphi\in lip(\mathbb{R}),
}
where $lip(\mathbb{R})$ denotes the class of Lipschitz functions.
We refer to \eq{001} as the weak convergence of $\overline{X}_n$ to the {\it maximal distribution} with parameters $\underline{\mu}$ and $\overline{\mu}$.

By assuming further that $\underline{\mu}=\overline{\mu}=:\mu$ and
\be{
\E[(X_1-\mu)^2]=\overline{\sigma}^2,\quad -\E[-(X_1-\mu)^2]=\underline{\sigma}^2, \quad \E[|X_1|^3]< \infty,
}
\cite{Pe07} obtained a central limit theorem (CLT):
\ben{\label{002}
\text{(CLT)}\quad \lim_{n\to \infty} \E\big\{ \varphi[\sqrt{n}(\overline{X}_n-\mu)]\big\} \to u(1,0), \ \text{for} \ \varphi\in lip(\mathbb{R}),
}
where $\{u(t, x): (t,x)\in [0,\infty)\times \mathbb{R}\}$ is the unique viscosity solution to the following parabolic partial differential equation (PDE) defined on $[0,\infty)\times \mathbb{R}$:
\ben{\label{102}
\partial_t u -G(\partial^2_{xx} u)=0, \  u |_{t=0}=\varphi,
}
where $G=G_{\underline{\sigma}, \overline{\sigma}}(\alpha)$
is the following function parametrized by $\underline{\sigma}$ and $\overline{\sigma}$:
\be{
G(\alpha)=
\frac{1}{2}(\overline{\sigma}^2 \alpha^+ -\underline{\sigma}^2 \alpha^-), \ \alpha\in \mathbb{R}.
}
Here we denote $\alpha^+:=\max\{0, \alpha\}$ and $\alpha^-:=(-\alpha)^+$. 

We refer to \eq{002} as the weak convergence of $\sqrt{n}(\overline{X}_n-\mu)$ to the {\it $G$-normal distribution} with parameters $\underline{\sigma}^2$ and $\overline{\sigma}^2$.
We will denote the right-hand side of \eq{002} by $\mathcal{N}_G[\varphi]$ and suppress its dependence on $\underline{\sigma}^2$ and $\overline{\sigma}^2$ for the ease of notation. Recently, \cite{So17} obtained a convergence rate for Peng's CLT \eq{002},
which is of the order $O(1/n^{\alpha/2})$ with an unspecified $\alpha\in (0,1)$.

In the special case that $\varphi$ is a convex function, we can verify by the Gaussian integration by parts formula
and $G(\partial^2_{xx}u)=\frac{\overline{\sigma}^2}{2} \partial^2_{xx} u$ from the convexity of $\varphi$
that 
\ben{\label{107}
u(t,x)=E[\varphi(x+t^{1/2} \overline{\sigma} Z)]
}
is the solution to the PDE \eq{102}, where $Z$ is a standard Gaussian random variable.
Therefore, the limit in \eq{002} is a normal distribution.
The same conclusion holds for concave $\varphi$, except that $\overline{\sigma}$ is replaced by $\underline{\sigma}$.
The limit in \eq{002} is also normal for any $\varphi\in lip(\mathbb{R})$ if $\overline{\sigma}=\underline{\sigma}$.

Note that \eq{001} and \eq{002} reduce to classical LLN and CLT if $\Theta$ in \eq{00} is a singleton. In this case $\E$ is a linear expectation.

The goal of this paper is to obtain  convergence rates for the above  LLN and a new type of renormalized  CLT  with explicitly given bounds in the framework of sublinear expectations. 
For  the LLN, we prove that 
\be{
\E\big\{[(\overline{X}_n-\overline{\mu})^+]^2+[(\overline{X}_n-\underline{\mu})^{-}]^2\big\}\leq \frac{2[\overline{\sigma}^2 + (\overline{\mu}-\underline{\mu})^2]}{n},
}
where 
\be{
\overline{\sigma}^2:=\sup_{\theta\in \Theta}E_\theta\big\{[X_1-E_\theta(X_1)]^2 \big\}.
}
This upper bound provides us with a quantitative version of the fact that
for large $n$, the sample mean is sufficiently concentrated inside the interval $[\underline{\mu}, \overline{\mu}]$. 
We deduce this upper bound from a new law of large numbers, which may be of independent interest. 
We will discuss a related statistical inference problem under sublinear expectations.
We also discuss extensions to the multi-dimensional setting.

With respect to the CLT in \eq{002}, for the special case that $\varphi$ is a convex function, we prove that
\be{
\Big|\E\Big\{\varphi\big[\frac{\sqrt{n}}{\overline{\sigma}}(\overline{X}_n-\mu)\big] \Big\}-E[\varphi(Z)] \Big|
\leq \frac{\log n +1}{\sqrt{n}} \Big\{2+ \E\Big[\big|\frac{X_1-\mu}{\overline{\sigma}}\big|^3\Big]\Big\} ||\varphi'||,
}
where $Z$ is a standard Gaussian random variable and $||\cdot||$ denotes the supremum norm of a function.
A similar bound for $\varphi$ being a concave function is also obtained.
For the general case where the mean of $X_1$ is uncertain (that is, $\underline{\mu}\ne \overline{\mu}$) and $\varphi$ may not be convex or concave,
we formulate a new central limit theorem for
\be{
\sum_{i=1}^n\frac{X_i-\mu_i}{\sigma_i\sqrt{n}},
}
where $\mu_i$ equals $\overline{\mu}$ or $\underline{\mu}$ depending on previous $\{X_j: j<i\}$ and the solution to the heat equation, and $\sigma_i$ depends furthermore on the set of the possible first two moments of $X_1$. Our main tool for proving the rate of convergence for the CLT is a combination of Lindeberg's swapping argument and Stein's method. This approach was used by \cite{Ro17} for proving a martingale CLT.

The sublinear expectation \eq{00} is defined through a class of probability measures, and in general, cannot be represented in a single probability space. However, for the $G$-normal distribution, which was used as the limit in Peng's CLT \eq{002}, we can give an approximation and a representation in a probability space.

The rest of this paper is organized as follows. In Section 2, we present our results on the law of large numbers. Section 3 contains the results related to the CLT. A new representation of the $G$-normal distribution is derived in Section 4. 
Most of the proofs are deferred to Section 5.

\section{Law of large numbers}

In this section, we first provide a rate of convergence for Peng's law of large numbers, then discuss its implication on the statistical inference for uncertain distributions, and finally, we present a new law of large numbers with rates that may be of independent interest.

\subsection{Rate of convergence}

Let $\{X_i\}_{i=1}^\infty$ be an i.i.d.\  sequence of random variables under a sublinear expectation $\E$
such that
\be{
\E[X]=\sup_{\theta\in \Theta} E_\theta[X]
}
for a family of linear expectations $\{E_\theta: \theta\in \Theta\}$.
Suppose both
$\overline{\mu}=\E[X_1]\  \text{and}  \  \underline{\mu}=- \E [-X_1]$
are finite. Define
\ben{\label{120}
\overline{\sigma}^2:=\sup_{\theta\in \Theta}E_\theta\Big\{\big[X_1-E_\theta(X_1)\big]^2\Big\}.
}
If $\overline{\sigma}^2$ is finite, then we can control the expected deviation of the sample mean 
$\overline{X}_n=\sum_{i=1}^n X_i/n$
from the interval $[\underline{\mu}, \overline{\mu}]$.

\begin{theorem}\label{t7}
Under the above setting,
we have
\ben{\label{1001}
\E\Big\{\big[(\overline{X}_n-\overline{\mu})^+\big]^2+\big[(\overline{X}_n-\underline{\mu})^{-}\big]^2 \Big\}\leq \frac{2[\overline{\sigma}^2 + (\overline{\mu}-\underline{\mu})^2]}{n}.
}
\end{theorem}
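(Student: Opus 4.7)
The plan is to split the two nonnegative summands by sub-additivity, use a sign symmetry to reduce to bounding $\E[((\overline{X}_n-\overline{\mu})^+)^2]$, and then close this piece with a one-sided $L^2$ induction on the partial sums. By \eqref{103},
\[
\E\Bigl\{\bigl[(\overline{X}_n-\overline{\mu})^+\bigr]^2+\bigl[(\overline{X}_n-\underline{\mu})^-\bigr]^2\Bigr\} \leq \E\bigl[((\overline{X}_n-\overline{\mu})^+)^2\bigr] + \E\bigl[((\overline{X}_n-\underline{\mu})^-)^2\bigr],
\]
so it suffices to show each piece is at most $(\overline{\sigma}^2+d^2)/n$, where $d:=\overline{\mu}-\underline{\mu}$. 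Replacing $X_i$ by $-X_i$ turns the second piece into the first applied to the i.i.d.\ sequence $\{-X_i\}$, whose upper mean is $-\underline{\mu}$ and whose $\overline{\sigma}^2$ and $d$ are unchanged, so the two bounds are symmetric and I focus on the first.

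Put $Y_i:=X_i-\overline{\mu}$ and $T_n:=\sum_{i=1}^n Y_i$, so that $\E[Y_i]=0$, $\E[-Y_i]=d$, and $(\overline{X}_n-\overline{\mu})^+ = T_n^+/n$; it then suffices to prove $\E[f(T_n)]\leq n(\overline{\sigma}^2+d^2)$ for $f(t):=(t^+)^2$. The crucial feature of this $f$ is that it is $C^1$ with $f'(t)=2t^+\geq 0$ and weak second derivative bounded by $2$, hence satisfies the one-sided Taylor bound $f(t+y)\leq f(t)+f'(t)y+y^2$ for all $t,y\in\mathbb{R}$. Applying it with $t=T_n$ and $y=Y_{n+1}$, then using monotonicity together with the sublinear independence of $Y_{n+1}$ from $T_n$,
\[
\E[f(T_{n+1})] \leq \E\Bigl[\bigl\{\E[f(t)+f'(t)Y_{n+1}+Y_{n+1}^2]\bigr\}_{t=T_n}\Bigr].
\]
For each fixed $t$, sub-additivity and positive homogeneity give $\E[f(t)+f'(t)Y_{n+1}+Y_{n+1}^2] \leq f(t)+f'(t)\E[Y_{n+1}]+\E[Y_{n+1}^2] = f(t)+\E[Y_{n+1}^2]$, the key step being that $f'(t)\geq 0$ lets positive homogeneity reduce $\E[f'(t)Y_{n+1}]$ to $f'(t)\E[Y_{n+1}]=0$. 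Monotonicity and identical distribution then yield $\E[f(T_{n+1})]\leq \E[f(T_n)]+\E[Y_1^2]$, and iteration from $\E[f(T_0)]=0$ gives $\E[f(T_n)]\leq n\E[Y_1^2]$.

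It remains to bound $\E[Y_1^2]\leq \overline{\sigma}^2+d^2$: under each $P_\theta$, with $\mu_\theta:=E_\theta[X_1]$, the expansion $(X_1-\overline{\mu})^2 = (X_1-\mu_\theta)^2 + 2(X_1-\mu_\theta)(\mu_\theta-\overline{\mu}) + (\mu_\theta-\overline{\mu})^2$ gives $E_\theta[(X_1-\overline{\mu})^2] = \sigma_\theta^2 + (\mu_\theta-\overline{\mu})^2 \leq \overline{\sigma}^2+d^2$, using \eqref{120} and $\mu_\theta\in[\underline{\mu},\overline{\mu}]$; taking $\sup_\theta$ yields the claim. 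Combining gives $\E[((\overline{X}_n-\overline{\mu})^+)^2]\leq(\overline{\sigma}^2+d^2)/n$, and the symmetric bound for the second piece together with the initial split establishes \eqref{1001}. The main conceptual hurdle is the first-order term in the induction: with mean uncertainty, $\E[g(T_n)Y_{n+1}]$ would generally accumulate an extra factor of $d$ wherever $g(T_n)$ could be negative, and these errors would destroy the $1/n$ rate. The asymmetric choice $f(t)=(t^+)^2$, whose derivative is always $\geq 0$, neutralises this, and splitting into $(\cdot)^+$ and $(\cdot)^-$ pieces at the start is what lets this one-sided function still capture the full squared distance from $[\underline{\mu},\overline{\mu}]$.
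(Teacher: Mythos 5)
Your proof is correct and follows essentially the same route as the paper's: the paper also splits $d^2_{[\underline{\mu},\overline{\mu}]}(\overline{X}_n)$ into the two one-sided pieces $((\cdot-\overline{\mu})^+)^2$ and $((\cdot-\underline{\mu})^-)^2$ via sub-additivity, and bounds each by a telescoping/induction argument in which a second-order Taylor expansion leaves a first-order term that vanishes exactly because the derivative of the one-sided square has a fixed sign and pairs with the corresponding extreme mean ($\E[X_1-\overline{\mu}]=0$), yielding the per-step error $\overline{\sigma}^2+(\overline{\mu}-\underline{\mu})^2$. The only differences are organizational (the paper routes this through a multivariate lemma with adaptively chosen $\mu_i$, which in this special case reduces to the constant choice you make directly, and you replace the second vertex's computation by a sign-flip symmetry), so your argument is a clean one-dimensional specialization of theirs.
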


\begin {remark}
We can rewrite \eq{1001} as
\be{
\E \big[d^2_{[\underline{\mu}, \overline{\mu}]} (\overline{X}_n)\big] \leq \frac{2[\overline{\sigma}^2 + (\overline{\mu}-\underline{\mu})^2]}{n},
}
where for $A\subset \mathbb{R}^d$ and $x\in \mathbb{R}^d$, $d_A(x):=\inf_{y\in A}|y-x|$.
Clearly, for any interval $I$ larger than $[\underline{\mu}, \overline{\mu}]$, i.e., $[\underline{\mu}, \overline{\mu}]\subset I$, the conclusion of Theorem \ref {t7} still holds for $d^2_{I}(\overline{X}_n)$.  In fact, $[\underline{\mu}, \overline{\mu}]$ is the smallest interval satisfying Theorem \ref {t7}.
According to (\ref{001}),   if $[\underline{\nu},\overline{\nu}]\nsupseteq [\underline{\mu},\overline{\mu}]$, then
\[\lim_{n\to \infty}\mathbb{E}\big[d_{[\underline{\nu},\overline{\nu}]}(\overline{X}_n)\big]=\sup_{x\in[\underline{\mu}, \overline{\mu}]}d_{[\underline{\nu},\overline{\nu}]}(x)>0.\] 
\end {remark}

\begin{remark} \eq{001} presents a law of large numbers under sublinear expectations where the convergence is in the distribution. In fact, if $\overline{\mu}>\underline{\mu}$, the convergence would not be in the strong sense: there does not exist a random variable $\eta$ such that
\besn{\label{se1}
\lim_{n\rightarrow\infty}\mathbb{E}\big[\big|\overline{X}_n-\eta\big|\big]=0.
}
Indeed, if (\ref{se1}) holds, then by \eq{001}, $\eta$ must be maximally distributed.  Set  $g(x)=\min\big\{\max\{x, \underline{\mu}\big\}, \overline{\mu}\}-\underline{\mu}$. On one hand, (\ref{se1}) implies that \[\lim_{n\rightarrow\infty}\mathbb{E}\big[-g(\overline{X}_n)(\eta-\underline{\mu}+1)\big]=\mathbb{E}\big[-g(\eta)(\eta-\underline{\mu}+1)\big]=0.\] On the other hand, as $\eta$ is independent of $g(S_n)$, we have
\begin{eqnarray*}\lim_{n\rightarrow\infty}\mathbb{E}\big[-g(\overline{X}_n)(\eta-\underline{\mu}+1)\big]&=&\lim_{n\rightarrow\infty}\mathbb{E}\big[g(\overline{X}_n)]\mathbb{E}[-(\eta-\underline{\mu}+1)\big]\\
&=&\mathbb{E}[g(\eta)]\mathbb{E}[-(\eta-\underline{\mu}+1)]=-(\overline{\mu}-\underline{\mu}).
\end{eqnarray*} This is a contradiction.
\end{remark}

Theorem \ref{t7} can be generalized to the multi-dimensional setting.

\begin{theorem}\label{t9}
Let $\{X_i\}_{i=1}^\infty$ be an i.i.d.\  sequence of $d$-dimensional random vectors 
under a sublinear expectation $\E=\sup_{\theta\in \Theta} E_\theta$.
Suppose that the convex hull of the closure of all the possible means $\{E_\theta [X_1]: \theta\in \Theta\}$ is  a  bounded convex polytope $\mathcal{P}$ with $m$ vertices. We have
\be{
\E \big[d^2_{\mathcal{P}} (\overline{X}_n)\big]\leq \frac{m\big\{ \sup_{\theta\in \Theta} E_\theta\big[|X_1-E_\theta[X_1]|^2\big]+diam^2(\mathcal{P})\big\}}{n},
}
where $\overline{X}_n=\sum_{i=1}^n X_i/n$, $|\cdot|$ denotes the Euclidean norm, and $diam(\mathcal{P})$ denotes the diameter of the polytope.
\end{theorem}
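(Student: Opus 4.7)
The plan is to decompose $d^2_\mathcal{P}$ into $m$ non-negative convex pieces, one per vertex, each controlled by a direct extension of the one-dimensional argument of Theorem~\ref{t7}. For each vertex $v_k$ of $\mathcal{P}$, let $T_k$ be the tangent cone of $\mathcal{P}$ at $v_k$ and $N_k:=T_k^\circ$ its polar (the normal cone); set
\be{
\phi_k(x):=d^2_{T_k}(x-v_k)\,(\,=d^2_{v_k+T_k}(x)\,),\qquad k=1,\ldots,m.
}
I will establish (i) the pointwise geometric bound $d^2_\mathcal{P}(x)\leq\sum_{k=1}^m\phi_k(x)$ on $\IR^d$, and (ii) the one-vertex estimate $\E[\phi_k(\overline{X}_n)]\leq\{\sup_{\theta\in\Theta}E_\theta[|X_1-E_\theta[X_1]|^2]+diam^2(\mathcal{P})\}/n$ for every $k$. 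Summing (ii) in $k$ and using monotonicity and sub-additivity of $\E$ together with (i) then yields the theorem.

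For (i), invoke the Moreau decomposition for $T_k$ and its polar $N_k$: each $y\in\IR^d$ splits orthogonally as $y=\mathrm{proj}_{T_k}(y)+\mathrm{proj}_{N_k}(y)$. Fix $x$, let $F$ be the face of $\mathcal{P}$ containing the Euclidean projection $\pi_\mathcal{P}(x)$, and pick any vertex $v_k$ of $F$. Then $\pi_\mathcal{P}(x)-v_k\in F-v_k\subseteq T_k$ while $x-\pi_\mathcal{P}(x)$ is normal to $F$ and hence in $N_k$, so uniqueness of the Moreau decomposition identifies these with the two summands of $x-v_k$ and gives $\phi_k(x)=|x-\pi_\mathcal{P}(x)|^2=d^2_\mathcal{P}(x)$ at this $k$. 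For every other vertex $v_{k'}$ the inclusion $v_{k'}+T_{k'}\supseteq\mathcal{P}$ gives $\phi_{k'}(x)\leq d^2_\mathcal{P}(x)$ unconditionally, so in fact $d^2_\mathcal{P}(x)=\max_k\phi_k(x)\leq\sum_k\phi_k(x)$.

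For (ii), mimic the 1D inductive argument behind Theorem~\ref{t7}. Each $\phi_k$ is $C^{1,1}$ convex with gradient $\nabla\phi_k(x)=2\,\mathrm{proj}_{N_k}(x-v_k)\in N_k$ and Hessian bounded above by $2\,\mathrm{Id}$, giving the quadratic Taylor bound $\phi_k(x+h)\leq\phi_k(x)+\nabla\phi_k(x)\cdot h+|h|^2$. Since $T_k$ is a cone, $\phi_k(\overline{X}_n)=d^2_{T_k}(\widetilde{S}_n)/n^2$ with $\widetilde{S}_n:=\sum_{i=1}^n(X_i-v_k)$. Substituting the Taylor bound at $\widetilde{S}_{n-1}$ with increment $X_n-v_k$ and taking conditional sublinear expectation by independence, the crucial cancellation is
\be{
\E\bigl[\nabla\phi_k(\widetilde{S}_{n-1})\cdot(X_n-v_k)\bigm|X_1,\ldots,X_{n-1}\bigr]=\sup_{\theta\in\Theta}\nabla\phi_k(\widetilde{S}_{n-1})\cdot(E_\theta[X_1]-v_k)\leq 0,
}
because $\nabla\phi_k(\widetilde{S}_{n-1})\in N_k$ and $v_k$ maximizes $w\cdot y$ on $\mathcal{P}$ for every $w\in N_k$ while $E_\theta[X_1]\in\mathcal{P}$. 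Together with the variance-bias bound $\E[|X_1-v_k|^2]\leq\sup_\theta E_\theta[|X_1-E_\theta[X_1]|^2]+diam^2(\mathcal{P})$ (using $v_k\in\mathcal{P}$), sub-additivity produces the recursion $\E[d^2_{T_k}(\widetilde{S}_n)]\leq\E[d^2_{T_k}(\widetilde{S}_{n-1})]+\sup_\theta E_\theta[|X_1-E_\theta[X_1]|^2]+diam^2(\mathcal{P})$, which iterates from $\widetilde{S}_0=0$ and yields (ii) after dividing by $n^2$.

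The principal difficulty is identifying the right per-vertex decomposition $\{\phi_k\}$: the pieces must simultaneously dominate $d^2_\mathcal{P}$ in sum (for (i)) and have each gradient landing inside $N_k$ (for (ii), so that the Taylor linear term has non-positive sublinear conditional expectation). Taking $\phi_k$ to be the squared distance to the affine cone $v_k+T_k$ realizes both demands, and the Moreau decomposition is precisely the identity that promotes the elementary pointwise bound $\phi_k\leq d^2_\mathcal{P}$ into pointwise equality at the vertices of the face containing $\pi_\mathcal{P}(x)$; once this geometric input is in place, the rest of the proof is a direct $m$-fold transcription of the 1D induction behind Theorem~\ref{t7}.
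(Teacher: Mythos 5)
Your proof is correct and follows essentially the same route as the paper: the same decomposition $d^2_{\mathcal{P}}\leq\sum_v d^2_{v+T_v}$ over the affine vertex cones, and the same per-vertex estimate obtained from the fact that the gradient of the squared distance lies in the normal cone at $v$, so that the linear Taylor term has non-positive conditional sublinear expectation when centering at $v$ (the paper packages this induction as its Theorem \ref{t4} plus Lemma \ref{l4}, whereas you inline it). Your Moreau-decomposition argument showing $d^2_{\mathcal{P}}=\max_k\phi_k$ is a slightly more detailed justification of the pointwise inequality than the paper gives, but the substance is identical.
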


Theorem \ref{t9} reduces to Theorem \ref{t7} in the one-dimensional case by regarding $[\underline{\mu}, \overline{\mu}]$ as a polytope with $m=2$ vertices.
Theorem \ref{t9} follows from a new law of large numbers stated in Section 2.3, which may be of independent interest.

\begin {remark} \label {r3} Based on  Theorem \ref{t9}, we can also give a convergence rate of $\E [d_{\mathcal{P}} (\overline{X}_n)]$ when $\mathcal{P}$ is a general convex set in $\mathbb{R}^d$ with a regular boundary.  For example,  if $\mathcal{P}$ is a disk of radius $R$ in a plane, we have (proof deferred to Section 5.1)
\[\mathbb{E}[d_{\mathcal{P}}(\overline{X}_n)]\le \frac{\big(7\pi^2R+\sqrt{\overline{\sigma}^2+16R^2}\big)}{n^{\frac{2}{5}}}. \]

\end {remark}

\subsection{Statistical inference for uncertain distributions}

The upper bound in Theorem \ref{t7} provides us with a quantitative version of the fact that
for large $n$, the sample mean is sufficiently concentrated inside the interval $[\underline{\mu}, \overline{\mu}]$.
This is related to the estimation of $\underline{\mu}$ and $\overline{\mu}$ described below.

Given an i.i.d.\  sequence of random variables $X_1, \dots, X_N$ under linear expectations, the usual estimator for their mean is
\be{
\hat{\mu}=\frac{X_1+\dots +X_N}{N}.
}
Here, we consider a statistical estimation under sublinear expectations.

Let $X_1,\dots, X_N$ be an i.i.d.\  sequence of random variables under a sublinear expectation $\E$
such that
\be{
\E[X]=\sup_{\theta\in \Theta} E_\theta[X]
}
for a family of linear expectations $\{E_\theta: \theta\in \Theta\}$.
Suppose that $N=nk$ and the data are expressed as follows:
\be{
\begin{bmatrix}
X_{11} & \dots & X_{1n} \\
\vdots & \vdots & \vdots \\
X_{k1} & \dots & X_{kn}
\end{bmatrix}.
}
\cite{JiPe16} proposed to estimate the lower mean $\underline{\mu}$ and the upper mean $\overline{\mu}$ of $X_1$ by
\be{
\hat{\underline{\mu}}:=\min_{1\leq j\leq k} \frac{\sum_{i=1}^n X_{ji}}{n}
}
and
\be{
\hat{\overline{\mu}}:=\max_{1\leq j\leq k} \frac{\sum_{i=1}^n X_{ji}}{n},
}
respectively.
Applying Theorem \ref{t7} and the union bound, we have the following result.

\begin{proposition}\label{p1}
Suppose $\E[X_1^2]<\infty$. We have
\be{
\E\big\{\big[(\hat{\overline{\mu}}-\overline{\mu})^+\big]^2\big\}\leq \frac{Ck}{n},\quad \text{and} \quad
\E\big\{\big[(\hat{\underline{\mu}}-\underline{\mu})^-\big]^2\big\}\leq \frac{Ck}{n},
}
where $C$ is a constant depending only on $\underline{\mu}, \overline{\mu}$ and $\overline{\sigma}^2$ in \eq{120}.
\end{proposition}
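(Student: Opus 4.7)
The plan is to reduce the problem to $k$ applications of Theorem~\ref{t7}, combined in a union-bound style via the monotonicity and sub-additivity of $\E$.

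Writing $\bar S_j := n^{-1}\sum_{i=1}^n X_{ji}$ for the $j$-th row average, the first step is the elementary identity
\be{
(\hat{\overline{\mu}} - \overline{\mu})^+ = \bbklr{\max_{1\le j\le k} \bar S_j - \overline{\mu}}^+ = \max_{1\le j\le k}(\bar S_j - \overline{\mu})^+,
}
which holds because $x\mapsto x^+$ is non-decreasing. Squaring and bounding the maximum of nonnegative quantities by their sum yields
\be{
\bkle{(\hat{\overline{\mu}} - \overline{\mu})^+}^2 \le \sum_{j=1}^k \bkle{(\bar S_j - \overline{\mu})^+}^2.
}
Applying monotonicity and then sub-additivity of $\E$ (properties~(i) and~(iii) from the introduction) pulls the sum outside:
\be{
\E\bklg{\bkle{(\hat{\overline{\mu}} - \overline{\mu})^+}^2} \le \sum_{j=1}^k \E\bklg{\bkle{(\bar S_j - \overline{\mu})^+}^2}.
}

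Since $\{X_i\}_{i=1}^N$ is i.i.d.\ with $N = nk$, each row $(X_{j1},\dots,X_{jn})$ has the same joint distribution as $(X_1,\dots,X_n)$, so $\bar S_j$ is distributed as $\bar X_n$. Theorem~\ref{t7} therefore bounds each summand above by $2[\overline{\sigma}^2 + (\overline{\mu}-\underline{\mu})^2]/n$ (after first dropping the nonnegative $[(\bar S_j - \underline{\mu})^-]^2$ term from the left-hand side of \eq{1001} by monotonicity of $\E$). This gives the first inequality with $C = 2[\overline{\sigma}^2 + (\overline{\mu}-\underline{\mu})^2]$. The second inequality is entirely symmetric: start from $(\hat{\underline{\mu}} - \underline{\mu})^- = \max_{1\le j\le k}(\bar S_j - \underline{\mu})^-$ and run the same argument.

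There is essentially no hard step here. The only items to verify are the positive-part/maximum commutation, the i.i.d.\ structure of each row, and that $\overline{\sigma}^2 < \infty$ under the hypothesis $\E[X_1^2] < \infty$; the latter follows from $E_\theta\{[X_1-E_\theta(X_1)]^2\} \le E_\theta[X_1^2] \le \E[X_1^2]$ uniformly in $\theta\in\Theta$.
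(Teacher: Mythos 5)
Your proof is correct and follows essentially the same route as the paper: rewrite the positive part of the maximum as the maximum of positive parts, bound it by the sum, apply sub-additivity of $\E$, and invoke Theorem~\ref{t7} row by row with $C = 2[\overline{\sigma}^2+(\overline{\mu}-\underline{\mu})^2]$. The extra observations you include (each row being distributed as $(X_1,\dots,X_n)$, and $\overline{\sigma}^2<\infty$ following from $\E[X_1^2]<\infty$) are correct and only make explicit what the paper leaves implicit.
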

\begin{proof}
Define
\be{
Y_j:=\frac{\sum_{i=1}^n X_{ji}}{n}.
}
We have, by the union bound and Theorem \ref{t7},
\bes{
&\E\big\{\big[(\hat{\overline{\mu}}-\overline{\mu})^+\big]^2\big\}=\E\big\{\big[(\max_{1\leq j\leq k} Y_j-\overline{\mu})^+\big]^2\big\}\\
= &\E\big\{\max_{1\leq j\leq k}\big[( Y_j-\overline{\mu})^+\big]^2\big\}\leq \E\big\{ \sum_{j=1}^k\big[ (Y_j-\overline{\mu})^+\big]^2\big\}
\leq \frac{Ck}{n}.
}
The second inequality follows from the same argument.
\end{proof}

Proposition \ref{p1} ensures that as $n\to \infty$ and $k=o(n)$, the estimators by \cite{JiPe16} are sufficiently concentrated inside $[\underline{\mu}, \overline{\mu}]$.


\subsection{A new law of large numbers}

We first formulate a new law of large numbers for the one-dimensional case.
\begin{theorem}\label{t2}
Let $\{X_i\}_{i=1}^\infty$ be an i.i.d.\  sequence of random variables under a sublinear expectation $\E$ such that
\bes{
\E[X]=\sup_{\theta\in \Theta} E_\theta[X]
}
for a family of linear expectations $\{E_\theta: \theta\in \Theta\}$.
Suppose that $\E[ X_1^2]<\infty$.
Denote
\be{
\overline{\mu}:=\E[X_1],\ \underline{\mu}:=- \E [-X_1].
}
Then, for $\varphi$ differentiable such that $\varphi'\in lip(\mathbb{R})$, we have
\be{
\Big|\E\Big\{\varphi\big[\frac{\sum_{i=1}^n (X_i-\mu_i)}{n}\big]\Big\}-\varphi(0)\Big|\leq \frac{C_0||\varphi''||}{n}
}
where 
\be{
\mu_i=
\begin{cases}
\overline{\mu},\ \text{if}\ \varphi'\big[\frac{\sum_{j=1}^{i-1} (X_j-\mu_j)}{n}\big]\geq 0,\\
\underline{\mu},\ \text{if}\ \varphi'\big[\frac{\sum_{j=1}^{i-1} (X_j-\mu_j)}{n}\big]< 0,
\end{cases}
}
and
\be{
C_0=\frac{1}{2}[\overline{\sigma}^2 +(\overline{\mu}-\underline{\mu})^2], \quad \overline{\sigma}^2=\sup_{\theta\in \Theta}E_\theta\Big\{\big[X_1-E_\theta(X_1) \big]^2 \Big\}.
}
\end{theorem}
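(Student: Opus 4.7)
The plan is to bound $|\E[\varphi(S_n/n)]-\varphi(0)|$, where $S_k:=\sum_{i=1}^k(X_i-\mu_i)$, via the telescoping decomposition
\begin{equation*}
\E[\varphi(S_n/n)]-\varphi(0)=\sum_{k=1}^n\bigl(\E[\varphi(S_k/n)]-\E[\varphi(S_{k-1}/n)]\bigr),
\end{equation*}
controlling each increment by a one-step Taylor expansion at $S_{k-1}/n$ and exploiting the sign-adaptive choice of $\mu_k$ to kill the linear term under the sublinear expectation.

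For a single increment, I would freeze the past: fix $(x_1,\dots,x_{k-1})$, which determines $S_{k-1}=:s$ and $\mu_k=:\mu\in\{\overline{\mu},\underline{\mu}\}$ deterministically. Taylor's theorem gives
\begin{equation*}
\varphi\Bigl(\tfrac{s+X_k-\mu}{n}\Bigr)=\varphi(s/n)+\tfrac{\varphi'(s/n)}{n}(X_k-\mu)+R,\qquad |R|\leq\tfrac{||\varphi''||}{2n^2}(X_k-\mu)^2.
\end{equation*}
The key algebraic observation is that $\E\bigl[\tfrac{\varphi'(s/n)}{n}(X_k-\mu)\bigr]=0$: if $\varphi'(s/n)\geq 0$, then $\mu=\overline{\mu}$ and the nonnegative scalar factors out by positive homogeneity, leaving $\E[X_k-\overline{\mu}]=0$ (via \eq{33} applied with the constant $-\overline{\mu}$ and $\E[X_k]=\overline{\mu}$); if $\varphi'(s/n)<0$, then $\mu=\underline{\mu}$, and $\E[\varphi'(s/n)(X_k-\underline{\mu})/n]=\tfrac{|\varphi'(s/n)|}{n}\E[\underline{\mu}-X_k]=0$ since $\E[-X_k]=-\underline{\mu}$. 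Applying \eq{101} with the linear term playing the role of $X$ and $R$ the role of $Y$ then yields
\begin{equation*}
\bigl|\E[\varphi((s+X_k-\mu)/n)]-\varphi(s/n)\bigr|\leq \E[|R|]\leq\tfrac{||\varphi''||}{2n^2}\E[(X_k-\mu)^2]\leq\tfrac{C_0||\varphi''||}{n^2},
\end{equation*}
where the last inequality comes from the variance-plus-bias decomposition $E_\theta[(X_1-\mu)^2]=\Var_\theta(X_1)+(E_\theta[X_1]-\mu)^2$ combined with $E_\theta[X_1]\in[\underline{\mu},\overline{\mu}]$, which delivers $\E[(X_k-\mu)^2]\leq\overline{\sigma}^2+(\overline{\mu}-\underline{\mu})^2=2C_0$.

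To lift this conditional bound to the unconditional increment, I would set $F(x_1,\dots,x_{k-1}):=\E[\varphi((s+X_k-\mu)/n)]$ with $s,\mu$ the values just described, and invoke the independence of $X_k$ from $(X_1,\dots,X_{k-1})$ to write $\E[\varphi(S_k/n)]=\E[F(X_1,\dots,X_{k-1})]$. Since $|F(x_1,\dots,x_{k-1})-\varphi(s/n)|\leq C_0||\varphi''||/n^2$ holds uniformly in the past, a second application of \eq{101} yields $|\E[\varphi(S_k/n)]-\E[\varphi(S_{k-1}/n)]|\leq C_0||\varphi''||/n^2$, and summing the $n$ increments produces the claimed $C_0||\varphi''||/n$ bound. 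I expect the main subtlety to be handling the recursive, path-dependent nature of the $\mu_k$ within the sublinear framework: there is no general conditional sublinear expectation available, so I deliberately freeze the past as deterministic constants at each step, reducing every conditional estimate to a scalar computation, and then use the independence identity from \cite{Pe10} to propagate the resulting uniform bound outside.
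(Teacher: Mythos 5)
Your proposal is correct and follows essentially the same route as the paper, which proves the multivariate Theorem~\ref{t4} (of which Theorem~\ref{t2} is the $d=1$ case) by exactly this Lindeberg telescoping, a freeze-the-past Taylor expansion whose first-order term vanishes because of the sign-adaptive choice of $\mu_k$, the variance-plus-bias bound $\E[(X_k-\mu_k)^2]\leq \overline{\sigma}^2+(\overline{\mu}-\underline{\mu})^2$, and the independence identity to lift the conditional estimate. The only cosmetic difference is that you telescope the real numbers $\E[\varphi(S_k/n)]$ directly, while the paper telescopes inside the expectation and peels off one term at a time using sub-additivity; both are valid and rest on the same one-step claim.
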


Theorem \ref{t2} is a direct consequence of the following multivariate version, which will be proved in Section 5.1.
\begin{theorem}\label{t4}
Let $X_1, X_2, \dots$ be an i.i.d.\  sequence of $d$-dimensional random vectors under a sublinear expectation $\E$ such that
\be{
\E[X]=\sup_{\theta\in \Theta} E_\theta[X]
}
for a family of linear expectations $\{E_\theta: \theta\in \Theta\}$.
Let
\be{
\mathcal{M}_1:=\{E_\theta[X_1]: \theta\in \Theta\}
}
be all possible means of $X_1$.
Let $\mathcal{P}$ be the convex hull of the closure of $\mathcal{M}_1$.
We have, for $\varphi: \mathbb{R}^d\to \mathbb{R}$ differentiable such that the gradient $D\varphi: \mathbb{R}^d\to \mathbb{R}^d$ is a Lipschitz function,
\be{
\Big|\E\Big\{\varphi\big[\frac{\sum_{i=1}^n (X_i-\mu_i)}{n}\big]\Big\}-\varphi(0) \Big|\leq \frac{\lambda_* \big\{\sup_{\theta\in \Theta} E_\theta\big[|X_1-E_\theta[X_1]|^2\big]+diam^2(\mathcal{P})\big\}}{2n},
}
where
$\mu_i:=\argsup_{\mu\in \mathcal{P}}\big\{\mu\cdot D\varphi\big[\frac{\sum_{j=1}^{i-1}(X_j-\mu_j)}{n}\big]\big\}$ (if the $\argsup$ is not unique, choose any value),
$\lambda_*$ is the supremum norm of the operator norm of the Hessian $D^2\varphi$, and $diam(\mathcal{P})$ denotes the diameter of $\mathcal{P}$.
\end{theorem}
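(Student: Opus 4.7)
The plan is a telescoping argument along the adapted partial sums $S_i:=\sum_{j=1}^{i}(X_j-\mu_j)$. Setting $f_i:=\E[\varphi(S_i/n)]$, we have $f_0=\varphi(0)$ and $\E[\varphi(S_n/n)]-\varphi(0)=\sum_{i=1}^n(f_i-f_{i-1})$ as a deterministic numerical identity, so it suffices to prove the one-step bound $|f_i-f_{i-1}|\le \lambda_* C/(2n^2)$, where $C:=\sup_{\theta\in\Theta}E_\theta[|X_1-E_\theta[X_1]|^2]+\mathrm{diam}^2(\mathcal{P})$. The key analytic tool is the quadratic Taylor inequality $|\varphi(y)-\varphi(x)-D\varphi(x)\cdot(y-x)|\le (\lambda_*/2)|y-x|^2$, which is valid because $\lambda_*$ bounds the Lipschitz constant of $D\varphi$.

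For the one-step upper bound I condition on $(x_1,\dots,x_{i-1})$, so that $s:=s_{i-1}$ and $\mu:=\mu_i$ become constants, and apply Taylor with $x=s/n$ and $y=(s+X_i-\mu)/n$. Taking $E_\theta$ and using its linearity yields $E_\theta[\varphi((s+X_i-\mu)/n)]\le \varphi(s/n)+\frac{1}{n}D\varphi(s/n)\cdot(E_\theta[X_1]-\mu)+\frac{\lambda_*}{2n^2}E_\theta[|X_1-\mu|^2]$. Since $\mu=\argsup_{\nu\in\mathcal{P}}\nu\cdot D\varphi(s/n)$ and $E_\theta[X_1]\in\mathcal{P}$, the first-order term is nonpositive; expanding $E_\theta[|X_1-\mu|^2]=E_\theta[|X_1-E_\theta[X_1]|^2]+|E_\theta[X_1]-\mu|^2\le C$ and taking $\sup_\theta$ yields $g(x_1,\dots,x_{i-1}):=\E[\varphi((s+X_i-\mu)/n)]\le \varphi(s/n)+\lambda_* C/(2n^2)$. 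Peng's independence definition unfolds $\E[\varphi(S_i/n)]=\E[g(X_1,\dots,X_{i-1})]$, and then monotonicity with constant-preservation produce $f_i\le f_{i-1}+\lambda_* C/(2n^2)$.

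For the matching lower bound, note that linearity of $\nu\mapsto D\varphi(s/n)\cdot\nu$ forces $\sup_{\nu\in\mathcal{P}}D\varphi(s/n)\cdot\nu=\sup_{\nu\in\overline{\mathcal{M}_1}}D\varphi(s/n)\cdot\nu$; compactness of $\overline{\mathcal{M}_1}$ supplies a $\nu^\ast\in\overline{\mathcal{M}_1}$ with $D\varphi(s/n)\cdot\nu^\ast=D\varphi(s/n)\cdot\mu$, and by definition of the closure a sequence $\theta_k\in\Theta$ with $E_{\theta_k}[X_1]\to\nu^\ast$. Applying the lower Taylor bound at $\theta=\theta_k$ together with $\E[\cdot]\ge E_{\theta_k}[\cdot]$, then letting $k\to\infty$ using the uniform control $E_{\theta_k}[|X_1-\mu|^2]\le C$, gives $g(x_1,\dots,x_{i-1})\ge \varphi(s/n)-\lambda_* C/(2n^2)$. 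Invoking Peng's independence identity once more yields $f_i\ge f_{i-1}-\lambda_* C/(2n^2)$, and summing the two one-step inequalities delivers $|f_n-\varphi(0)|\le \lambda_* C/(2n)$, as required.

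The main obstacle is the lower bound: the first-order residual $D\varphi(s/n)\cdot(E_\theta[X_1]-\mu)$ is uniformly nonpositive over $\theta$, so it cannot be annihilated by taking $\sup_\theta$ directly, and one must exploit that the linear functional $D\varphi(s/n)\cdot(\cdot)$ attains its supremum on $\overline{\mathcal{M}_1}$ and then pass to the limit along an approximating sequence in $\Theta$. A secondary technicality is ensuring that the one-step function $g(x_1,\dots,x_{i-1})$ is regular enough (for instance continuous or Lipschitz) for the independence identity to apply, which may require a measurable selection of $\mu_i$ when the $\argsup$ is not unique.
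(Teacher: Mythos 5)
Your proposal is correct and follows essentially the same route as the paper: a one-step (Lindeberg-type) telescoping combined with a conditional second-order Taylor expansion, where the first-order term is controlled by the $\argsup$ choice of $\mu_i$ over $\mathcal{P}$ and the quadratic remainder gives the $\lambda_*\,C/(2n^2)$ per-step cost. The only cosmetic differences are that you telescope the numbers $f_i$ outside the sublinear expectation rather than inside, and you realize the vanishing of the conditional linear term via an approximating sequence $\theta_k$ in $\Theta$ instead of the paper's direct identity $\sup_{\mu\in\mathcal{M}_1}\mu\cdot D\varphi=\sup_{\mu\in\mathcal{P}}\mu\cdot D\varphi$; your closing remark about the regularity of $g$ needed for the independence identity flags a point the paper itself passes over silently.
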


\section{Central limit theorem with rate of convergence}

As explained in the Introduction, in the special case where $\varphi$ is a convex or concave test function, the limit in Peng's CLT in \eq{002} is a usual normal distribution. We first provide a rate of convergence for this special case. Moreover, unlike in \eq{002}, we do not need to impose the {\it identically distributed} assumption.

\begin{theorem}\label{t6}
Suppose $X_1,\dots, X_n$ are independent under a sublinear expectation $\E$ with
\be{
\E[X_i]=-\E[-X_i]=\mu, \ \E[(X_i-\mu)^2]=\overline{\sigma}_i^2, -\E[-(X_i-\mu)^2]=\underline\sigma_i^2.
}
Let 
\be{
\sum_{i=1}^n \overline\sigma_i^2=\overline{B}_n^2.
}
For convex test functions $\varphi(\cdot)\in lip(\mathbb{R})$, we have
\be{
\Big|\E \Big\{\varphi\big[\sum_{i=1}^n (X_i-\mu)/\overline{B}_n\big] \Big\}-E[\varphi(Z)] \Big|\leq  \frac{||\varphi'||}{\overline{B}_n} \sum_{i=1}^n  \frac{2\overline{\sigma}_i^3+\E [|X_i-\mu|^3]}{\sum_{j=i}^n \overline{\sigma}_j^2},
}
where $Z$ is a standard Gaussian random variable.
For concave functions $\varphi$, if we let
\be{
\sum_{i=1}^n \underline\sigma_i^2=\underline{B}_n^2, 
}
then
\be{
\Big|\E \Big\{\varphi\big[\sum_{i=1}^n (X_i-\mu)/\underline{B}_n\big] \Big\}-E[\varphi(Z)] \Big|\leq  \frac{||\varphi'||}{\underline{B}_n} \sum_{i=1}^n  \frac{2\underline{\sigma}_i^2\overline{\sigma}_i+\E [|X_i-\mu|^3]}{\sum_{j=i}^n \underline{\sigma}_j^2}.
}
\end{theorem}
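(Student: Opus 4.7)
The strategy combines a Lindeberg-style telescoping with a third-order Taylor expansion through the classical Gaussian heat semigroup, following the approach of \cite{Ro17}. Assume WLOG $\mu = 0$ and write $W = \sum_i X_i/\overline{B}_n$. Introduce $u(s,x) := E[\varphi(x+\sqrt{s}\,Z)]$ with $Z\sim\N(0,1)$, so that $u$ solves $\partial_s u = \tfrac12\partial_{xx}^2 u$, $u(0,\cdot) = \varphi$, $u(1,0) = E[\varphi(Z)]$. Since convexity of $\varphi$ is preserved under Gaussian convolution, $u_{xx}(s,\cdot) \geq 0$ for every $s \geq 0$; this is exactly what collapses the $G$-PDE \eqref{102} to the ordinary heat equation in the convex regime, as already observed around \eqref{107}.

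Set $S_k := \sum_{i\leq k} X_i/\overline{B}_n$ and $s_k := \overline{B}_n^{-2}\sum_{j>k}\overline\sigma_j^2$, so that $s_0 = 1$, $s_n = 0$, $u(s_n,S_n) = \varphi(W)$, $u(s_0,S_0) = E[\varphi(Z)]$, and
\[\varphi(W) - E[\varphi(Z)] = \sum_{k=1}^n \Delta_k,\qquad \Delta_k := u(s_k,S_k) - u(s_{k-1},S_{k-1}).\]
Iterating sub-additivity and \eqref{103} reduces the task to bounding $\E[\Delta_k]$ and $\E[-\Delta_k]$ by the stated summand for each $k$. By independence of $X_k$ from $S_{k-1}$, $\E[\Delta_k] = \E[h(S_{k-1})]$ with $h(w) := \E[u(s_k,w+X_k/\overline{B}_n)] - u(s_{k-1},w)$. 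Taylor expand $u$ to second order around the base point $(s_{k-1},w)$ and invoke the PDE $u_s = \tfrac12 u_{xx}$; this yields
\[u(s_k,w+X_k/\overline{B}_n) - u(s_{k-1},w) = u_x(s_{k-1},w)\tfrac{X_k}{\overline{B}_n} + \tfrac{u_{xx}(s_{k-1},w)}{2\overline{B}_n^2}\bigl(X_k^2 - \overline\sigma_k^2\bigr) + \tilde R_k,\]
with $\tilde R_k$ a combined remainder. Taking $\E$ over $X_k$ kills both leading terms: the linear one by $\E[X_k]=-\E[-X_k]=0$ (which, via positive homogeneity, forces $\E[cX_k] = 0$ for any scalar $c$); and the quadratic one by $u_{xx}(s_{k-1},w)\geq 0$ (convexity of $u$) plus positive homogeneity giving $\E[u_{xx}(s_{k-1},w)(X_k^2-\overline\sigma_k^2)] = u_{xx}(s_{k-1},w)\bigl(\E[X_k^2] - \overline\sigma_k^2\bigr) = 0$.

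What remains is to estimate $\E[\tilde R_k]$ and $\E[-\tilde R_k]$. The pure $X_k^3$ contribution is controlled by $\tfrac16||u_{xxx}(s_{k-1},\cdot)||_\infty\E|X_k|^3/\overline{B}_n^3$; the Stein-type Gaussian integration-by-parts identity $u_{xxx}(s,x) = s^{-1} E[\varphi'(x+\sqrt{s}\,Z)(Z^2-1)]$ yields $||u_{xxx}(s,\cdot)||_\infty \leq 2\sqrt{2/\pi}\,||\varphi'||/s$, evaluated at $s = s_{k-1}$, which places $s_{k-1}\overline{B}_n^2 = \sum_{j\geq k}\overline\sigma_j^2$ in the denominator. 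The higher-order remainder terms (coming from the 2nd-order-in-$s$ and mixed $(s,x)$ pieces of the Taylor expansion) involve higher heat-kernel derivatives $||u_{xxxx}(s,\cdot)||_\infty\leq C||\varphi'||/s^{3/2}$, etc., and are controlled using the elementary inequality $\overline\sigma_k^2 \leq s_{k-1}\overline{B}_n^2$; they contribute the $2\overline\sigma_k^3$ piece in the numerator. Summing over $k$ gives the stated convex bound. The concave case is symmetric with $\overline{B}_n$ replaced by $\underline{B}_n$: now $u_{xx}\leq 0$, the second-order cancellation applies positive homogeneity to $-u_{xx}\geq 0$ together with $\E[-X_k^2] = -\underline\sigma_k^2$, and the bound $2\underline\sigma_k^3 \leq 2\underline\sigma_k^2\overline\sigma_k$ produces the stated numerator.

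The hard part will be the sublinear-expectation bookkeeping: the exact second-order cancellation requires $u_{xx}$ to have a definite sign, which is why convexity (or concavity) of $\varphi$ is essential and determines whether $\overline\sigma_k^2$ or $\underline\sigma_k^2$ is the matched variance via positive homogeneity. A secondary subtlety is that the heat-kernel derivative bounds must be evaluated at time $s_{k-1}$ rather than $s_k$, both to produce the correct denominator $\sum_{j\geq k}\overline\sigma_j^2$ in the theorem and to keep the estimate finite at $k = n$, where $s_n = 0$.
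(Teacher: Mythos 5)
Your skeleton matches the paper's: Lindeberg telescoping through the heat semigroup $u(s,x)=E[\varphi(x+\sqrt sZ)]$, cancellation of the first-order term because $X_k$ has no mean uncertainty, and cancellation of the second-order term by positive homogeneity applied to $u_{xx}\geq 0$ together with $\E[X_k^2]=\overline\sigma_k^2$ --- this last point is the heart of the theorem and you have it right, as is the reduction of the sublinear bookkeeping to \eqref{33} and \eqref{101}. The gap is in the one-step remainder. A second-order space--time Taylor expansion of $u(s_k, w+X_k/\overline{B}_n)$ around $(s_{k-1},w)$ has a remainder whose integral form involves $\partial_x^3u$, $\partial_s\partial_x^2u=\tfrac12\partial_x^4u$, etc., evaluated at intermediate times $r\in[s_k,s_{k-1}]$, not at $s_{k-1}$. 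You cannot simply declare that the heat-kernel bounds are ``evaluated at time $s_{k-1}$'': the bounds $\|\partial_x^ju(r,\cdot)\|\leq C_j\|\varphi'\|r^{-(j-1)/2}$ degenerate as $r\downarrow0$, and at the step $k=n$ one has $s_n=0$, where $u(0,\cdot)=\varphi$ is merely Lipschitz and the third-derivative expansion you invoke does not exist. So the sentence ``the higher-order remainder terms \ldots are controlled using $\overline\sigma_k^2\leq s_{k-1}\overline B_n^2$'' is precisely the unproved step, and it is the only genuinely delicate part of the proof. (The route is repairable: expand in space at the fixed time $s_{k-1}$, write the time increment as $-\tfrac12\int_{s_k}^{s_{k-1}}[u_{xx}(r,w+\xi)-u_{xx}(s_{k-1},w)]\,dr$, and integrate the singular bounds in $r$; the critical integrals converge and are of the right order, but this costs different constants and a separate treatment of the steps where $s_k$ is small, none of which you have carried out.)

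The paper sidesteps all of this with a device your sketch lacks. Given $W_{k-1}$ it writes the one-step difference as $\E^{W_{k-1}}\{\varphi(W_k+T_{n-k})-E[\varphi(Z_{W_{k-1},s_{k-1}})]\}$ with $T_{n-k}\sim N(0,s_k)$ an independent Gaussian, substitutes the Stein equation $s_{k-1}f'(w)-(w-W_{k-1})f(w)=\varphi(w)-E[\varphi(Z_{W_{k-1},s_{k-1}})]$ at the single fixed scale $s_{k-1}$, and uses Gaussian integration by parts \eqref{21} to cancel the $T_{n-k}$ contribution exactly; what remains is $\tfrac{\overline\sigma_k^2}{\overline B_n^2}f'-\xi_kf$, expanded purely spatially with $\|f''\|\leq2\|\varphi'\|/s_{k-1}$ (Lemma \ref{l2}), so every derivative sits at the scale $s_{k-1}$ and the step $k=n$ causes no trouble. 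Lemma \ref{l3} then converts the leading term into $\partial^2_{xx}V\geq0$, which is where convexity enters. One smaller correction: in the concave case the numerator $2\underline\sigma_i^2\overline\sigma_i$ arises from $\E[|X_i-\mu|]\leq\overline\sigma_i$ (Cauchy--Schwarz under $\E$) multiplying the Gaussian step variance $\underline\sigma_i^2/\underline B_n^2$, not from the inequality $\underline\sigma_i^3\leq\underline\sigma_i^2\overline\sigma_i$; your remainder would never produce $\underline\sigma_i^3$, because $\E[|X_i-\mu|]$ is controlled by $\overline\sigma_i$ rather than $\underline\sigma_i$.
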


The proof of Theorem \ref{t6} follows from a similar and simpler proof of Theorem \ref{t1} below and is deferred to Section 5.2.
Theorem \ref{t6} has the following corollary if the $X_i$'s are assumed to be i.i.d.\ 
\begin{corollary}\label{t0}
Under the conditions of Theorem \ref{t6},
suppose further that $X_1,\dots, X_n$ are i.i.d., and denote
\be{
\overline{\sigma}^2:=\E[(X_1-\mu)^2],\quad \underline{\sigma}^2:=-\E[-(X_1-\mu)^2].
}
Then, for a convex test function $\varphi\in lip(\mathbb{R})$, we have
\be{
\Big|\E\Big\{\varphi\Big[\frac{\sum_{i=1}^n(X_i-\mu)}{\overline{\sigma}\sqrt{n}}\Big]\Big\}-E[\varphi(Z)] \Big|
\leq \frac{\log n +1}{\sqrt{n}} \Big(2+ \E\Big[\Big|\frac{X_1-\mu}{\overline{\sigma}}\Big|^3\Big]\Big) ||\varphi'||,
}
where $Z$ is a standard Gaussian random variable.
If $\varphi$ is concave,
then we have
\be{
\Big|\E\Big\{\varphi\Big[\frac{\sum_{i=1}^n(X_i-\mu)}{\underline{\sigma}\sqrt{n}}\Big]\Big\}-E[\varphi(Z)] \Big|
\leq \frac{\log n +1}{\sqrt{n}} \Big(\frac{2\overline{\sigma}}{\underline{\sigma}}+ \E\Big[\Big|\frac{X_1-\mu}{\underline{\sigma}}\Big|^3\Big]\Big) ||\varphi'|| .
}
\end{corollary}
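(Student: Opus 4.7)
The plan is to obtain Corollary \ref{t0} as a direct specialization of Theorem \ref{t6} to the i.i.d.\ case, combined with the classical harmonic sum estimate $\sum_{k=1}^n 1/k \leq \log n + 1$. Nothing new needs to be proved about sublinear expectations or Stein's method here; the work is purely arithmetic simplification.

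First I would apply Theorem \ref{t6} in the convex case. Under the i.i.d.\ hypothesis, $\overline{\sigma}_i^2 = \overline{\sigma}^2$ for every $i$, so $\overline{B}_n^2 = n\overline{\sigma}^2$ and $\sum_{j=i}^n \overline{\sigma}_j^2 = (n-i+1)\overline{\sigma}^2$. Likewise $\E[|X_i-\mu|^3] = \E[|X_1-\mu|^3]$ is independent of $i$. Substituting these into the bound of Theorem \ref{t6} yields
\be{
\bigl|\E\{\varphi[\sum_{i=1}^n(X_i-\mu)/(\overline{\sigma}\sqrt{n})]\} - E[\varphi(Z)]\bigr|
\leq \frac{\|\varphi'\|}{\overline{\sigma}\sqrt{n}} \cdot \frac{2\overline{\sigma}^3 + \E[|X_1-\mu|^3]}{\overline{\sigma}^2} \sum_{i=1}^n \frac{1}{n-i+1}.
}
Then I would reindex $k = n-i+1$, factor out $\overline{\sigma}^3$ from the bracket to form the dimensionless moment $\E[|(X_1-\mu)/\overline{\sigma}|^3]$, and bound the harmonic sum $\sum_{k=1}^n 1/k$ by $\log n + 1$. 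This produces exactly the claimed inequality in the convex case.

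For the concave case I would argue identically, using the second bound in Theorem \ref{t6}. Here $\underline{B}_n = \underline{\sigma}\sqrt{n}$ and $\sum_{j=i}^n \underline{\sigma}_j^2 = (n-i+1)\underline{\sigma}^2$, so the same substitution gives
\be{
\bigl|\E\{\varphi[\sum_{i=1}^n(X_i-\mu)/(\underline{\sigma}\sqrt{n})]\} - E[\varphi(Z)]\bigr|
\leq \frac{\|\varphi'\|}{\underline{\sigma}\sqrt{n}} \cdot \frac{2\underline{\sigma}^2\overline{\sigma} + \E[|X_1-\mu|^3]}{\underline{\sigma}^2} \sum_{k=1}^n \frac{1}{k}.
}
Dividing the bracket by $\underline{\sigma}^3$ yields the factor $2\overline{\sigma}/\underline{\sigma} + \E[|(X_1-\mu)/\underline{\sigma}|^3]$, and again $\sum_{k=1}^n 1/k \leq \log n + 1$ completes the bound.

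Since the derivation is essentially a bookkeeping exercise on top of Theorem \ref{t6}, I do not anticipate any genuine obstacle. The only point worth double-checking is the concave normalization: one must keep the $\overline{\sigma}$ inside the numerator of Theorem \ref{t6}'s concave bound intact (producing the asymmetric ratio $\overline{\sigma}/\underline{\sigma}$), rather than mistakenly pulling it out as a third power. This explains why the concave bound degrades by the factor $\overline{\sigma}/\underline{\sigma}$ relative to the convex one and reduces to the convex bound when $\underline{\sigma}=\overline{\sigma}$, which is a useful sanity check.
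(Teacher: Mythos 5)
Your proposal is correct and follows exactly the paper's own (very brief) proof: specialize Theorem \ref{t6} to equal $\overline{\sigma}_i$ and $\underline{\sigma}_i$, so that $\overline{B}_n^2=n\overline{\sigma}^2$, $\underline{B}_n^2=n\underline{\sigma}^2$, and the inner sums become $(n-i+1)$ times the common variance, then bound the resulting harmonic sum by $\log n+1$. The arithmetic in both the convex and concave cases checks out, including the asymmetric factor $2\overline{\sigma}/\underline{\sigma}$ in the concave bound.
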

\begin{proof}
Corollary \ref{t0} follows directly from Theorem \ref{t6} by 
\be{
\overline{B}_n^2=n \overline{\sigma}_i^2\ \text{and}\ \underline{B}_n^2=n \underline{\sigma}_i^2\ \text{for all}\ i=1,\dots, n,
}
and the fact that
$1+\dots+\frac{1}{n}\leq \log n+1.$
\end{proof}

For the general case where the mean of $X_1$ is uncertain (that is, $\underline{\mu}\ne \overline{\mu}$) and $\varphi$ may not be convex or concave,
we formulate a new CLT for
\be{
\sum_{i=1}^n\frac{X_i-\mu_i}{\sigma_i\sqrt{n}},
}
where $\mu_i$ equals $\overline{\mu}$ or $\underline{\mu}$ depending on previous $\{X_j: j<i\}$ and the solution to the heat equation, and $\sigma_i$ depends furthermore on the set of the possible first two moments of $X_1$.
As above, let $\{X_i\}_{i=1}^\infty$ be an i.i.d.\  sequence of random variables under a sublinear expectation $\E$ such that
\bes{
\E[X]=\sup_{\theta\in \Theta} E_\theta[X]
}
for a family of linear expectations $\{E_\theta: \theta\in \Theta\}$.
Suppose that $\E[ |X_1|^3]<\infty$.
Define
\ben{\label{12}
\overline{\mu}:= \E(X_1),\ \underline{\mu}:=- \E [-X_1],
}
and for each possible mean $\mu$ of $X_1$, define
\ben{\label{121}
\overline{\sigma}_\mu^2:=\sup_{\theta\in \Theta: E_\theta(X_1)=\mu} E_\theta[(X_1-\mu)^2], \
\underline{\sigma}_\mu^2:=\inf_{\theta\in \Theta: E_\theta(X_1)=\mu} E_\theta[(X_1-\mu)^2].
}
We impose the following assumption:

\noindent {\bf Assumption A}. \textit{Regarded as functions of $\mu$, $\overline{\sigma}_\mu^2$ and $\underline{\sigma}_\mu^2$ are continuous at, or can be continuously extended to, $\mu=\overline{\mu}$ and $\mu=\underline{\mu}$.}

Denote
\besn{\label{122}
&\overline{\sigma}_{\overline{\mu}}^2:= \lim_{\mu\to \overline{\mu}^-} \overline{\sigma}_\mu^2,
\quad \underline{\sigma}_{\overline{\mu}}^2:= \lim_{\mu\to \overline{\mu}^-} \underline{\sigma}_\mu^2,
\\
&\overline{\sigma}_{\underline{\mu}}^2:= \lim_{\mu\to \underline{\mu}^-} \overline{\sigma}_\mu^2,
\quad \underline{\sigma}_{\underline{\mu}}^2:= \lim_{\mu\to \underline{\mu}^-} \underline{\sigma}_\mu^2.
}
There is no conflict of notation between \eq{121} and \eq{122} by Assumption A.
We assume further that

\noindent {\bf Assumption B}. \textit{All the four quantities in \eq{122} are positive.}

Let
\besn{\label{18}
\mathcal{M}_2=\big\{\big(E_\theta [X_1], E_\theta [X_1-E_\theta (X_1)]^2\big): \theta\in \Theta\big\}
}
be the set of all possible pairs of mean and variance of $X_1$.
Define
\ben{\label{51}
\sigma_0^2:=\min_{\mu_i=\underline{\mu}\ \text{or}\ \overline{\mu}} \{ \overline{\sigma}^2_{\mu_i} \wedge \inf_{(\mu,\sigma^2)\in \mathcal{M}_2} [\sigma^2+(\mu-\mu_i)^2] \},
}
\be{
\overline{\sigma}^2:=\sup_{\theta\in \Theta}E_\theta\Big\{\big[X_1-E_\theta(X_1) \big]^2 \Big\},
}
and
\be{
\overline{\gamma}:=\sup_{\theta\in \Theta}E_\theta \Big[|X_1-E_\theta[X_1]|^3 \Big].
}
On the basis of Assumptions A and B, we have $\sigma_0^2>0$.
We have the following theorem. 

\begin{theorem}\label{t1}
Under the above setting, we have the following CLT: for each $\varphi\in lip(\mathbb{R})$,
\ben{\label{15}
\Big| \E \big[\varphi(\frac{1}{\sqrt{n}} \sum_{i=1}^n  \frac{X_i-\mu_i}{\sigma_i})\big]-E[\varphi(Z)] \Big|\leq \frac{C_1 (\log n+1)}{\sqrt{n}} ||\varphi'||.
}
In \eq{15},
\be{
C_1=2+ \frac{5[\overline{\sigma}+(\overline{\mu}-\underline{\mu})]}{\sigma_0}
+\frac{4[\overline{\gamma}+(\overline{\mu}-\underline{\mu})^3]}{\sigma_0^3},
}
$Z$ is a standard Gaussian random variable, with
\be{
t_i=\frac{n-i}{n}, \  W_i=\frac{1}{\sqrt{n}}\sum_{j=1}^{i} \frac{X_j-\mu_j}{\sigma_j},
}
$\mu_i=\mu_i((X_j, \mu_j, \sigma_j): j<i)$ are defined as
\ben{\label{13}
\mu_i=
\begin{cases}
\overline{\mu} & \text{if}\ \partial_x V_{i-1}\geq 0, \\
\underline{\mu} & \text{if}\ \partial_x V_{i-1}< 0,
\end{cases}
}
$\sigma_i=\sigma_i((X_j, \mu_j, \sigma_j): j<i, \mu_i)$ are defined as
\ben{\label{14}
\sigma_i=
\begin{cases}
\inf\big\{b: b\geq \overline{\sigma}_{\mu_i},  \sup_{(\mu, \sigma^2)\in \mathcal{M}_2} [f_{i-1, b}(\mu, \sigma^2)]=0\big\}
& \text{if}\ \partial^2_{xx} V_{i-1}\geq 0,\\
\sup\big\{b: 0<b\leq \underline{\sigma}_{\mu_i},  \sup_{(\mu, \sigma^2)\in \mathcal{M}_2} [f_{i-1, b}(\mu, \sigma^2) ]=0\big\}
& \text{if}\
\partial^2_{xx} V_{i-1}< 0,
\end{cases}
}
where
\ben{\label{17}
f_{i-1, b}(\mu, \sigma^2)=\big[\frac{\mu-\mu_i}{b}\big]\frac{\partial_x V_{i-1}}{\sqrt{n}} +\big[\frac{\sigma^2+(\mu-\mu_i)^2}{b^2}-1\big] \frac{\partial^2_{xx}V_{i-1}}{n},
}
$V_{i-1}:= V(t_{i-1}, W_{i-1})$ and
$V(\cdot, \cdot)$ is the solution to the heat equation
\be{
\partial_t V (t, x)= \frac{1}{2} \partial^2_{xx} V(t, x), \ V(0,x)=\varphi(x).
}
\end{theorem}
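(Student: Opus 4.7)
My plan is a Lindeberg--Stein interpolation argument that uses the heat-equation solution $V(t,x)$ as a smooth Stein surrogate. Since $V(1,0) = E[\varphi(Z)]$ (heat kernel at time $1$) and $V(0,W_n) = \varphi(W_n)$, I first telescope
\be{
\varphi(W_n) - E[\varphi(Z)] = \sum_{i=1}^n \bigl[V(t_i, W_i) - V(t_{i-1}, W_{i-1})\bigr],
}
where $W_i = W_{i-1} + \xi_i$ and $\xi_i = (X_i-\mu_i)/(\sigma_i\sqrt{n})$, then take $\E$ of both sides. A Taylor expansion to first order in $t$ and second order in $x$, using the PDE $\partial_t V = \tfrac{1}{2}\partial^2_{xx} V$ to merge the two second-order pieces, rewrites each increment as $A_i + R_i$ with $A_i := \xi_i \partial_x V_{i-1} + \tfrac{1}{2}(\xi_i^2 - 1/n)\partial^2_{xx} V_{i-1}$ and $V_{i-1} := V(t_{i-1},W_{i-1})$.

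The crucial step is to show $\sup_{\theta\in\Theta} E_\theta[A_i \mid \mathcal{F}_{i-1}] \leq 0$, where $\mathcal{F}_{i-1} = \sigma(X_1,\dots,X_{i-1})$. Writing $\mu = E_\theta[X_1]$ and $\sigma^2 = E_\theta[(X_1-\mu)^2]$, a direct computation gives
\be{
E_\theta[A_i \mid \mathcal{F}_{i-1}] = \tfrac{1}{2}\,f_{i-1,\sigma_i}(\mu,\sigma^2) + \tfrac{(\mu-\mu_i)\,\partial_x V_{i-1}}{2\sigma_i \sqrt{n}}.
}
The rule \eqref{13} for $\mu_i$ makes the second summand non-positive, since $\mu \in [\underline{\mu},\overline{\mu}]$ forces $(\mu - \mu_i)\partial_x V_{i-1} \leq 0$, while the rule \eqref{14} for $\sigma_i$ guarantees $\sup_\theta f_{i-1,\sigma_i} = 0$ and hence $f_{i-1,\sigma_i}\leq 0$ for every $\theta$. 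Using the sublinear independence of $X_i$ from $(X_1,\dots,X_{i-1})$ together with sub-additivity \eqref{103}, an iterated conditioning argument now yields the sublinear supermartingale bound $\E[\sum_i A_i] \leq 0$, so that $\E[\varphi(W_n)] - E[\varphi(Z)] \leq \sum_i \E|R_i|$.

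I next control the remainders with Gaussian semigroup smoothing $\|\partial_x^k V(t,\cdot)\|_\infty \leq C_k\|\varphi'\|/t^{(k-1)/2}$ for $k \geq 2$, obtained by iterated integration by parts against the Gaussian density. The leading contribution is the cubic Taylor term $\tfrac{1}{6}\xi_i^3 \partial_x^3 V$ (the mixed and time-quadratic terms produced by the expansion are of the same or smaller order after the PDE substitution), giving $\E|R_i| \lesssim \|\varphi'\| \E|\xi_i|^3 / t_{i-1}$. To bound $\E|\xi_i|^3 \leq \E|X_i - \mu_i|^3/(\sigma_i^3 n^{3/2})$ I need the lower bound $\sigma_i \geq \sigma_0$; in the first case of \eqref{14} this is immediate from $\sigma_i \geq \overline{\sigma}_{\mu_i} \geq \sigma_0$, while in the second case it follows from monotonicity of $b \mapsto \sup_\theta f_{i-1,b}$ in $b$ together with $\sup_\theta f_{i-1,\sigma_0} \leq 0$, a fact that is precisely the content of the definition \eqref{51} of $\sigma_0$. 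Combined with a cube inequality bounding $\E|X_i - \mu_i|^3$ by a constant times $\overline{\gamma} + (\overline{\mu}-\underline{\mu})^3$, and the harmonic sum $\sum_{i=1}^n 1/(n-i+1) \leq \log n + 1$, this delivers the stated rate.

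The main obstacle is the matching lower bound $\E[\varphi(W_n)] - E[\varphi(Z)] \geq -C_1(\log n+1)\|\varphi'\|/\sqrt{n}$, because the telescoping above only exploits the one-sided control $\sup_\theta E_\theta[A_i\mid\mathcal{F}_{i-1}] \leq 0$ and not its infimum counterpart (which can be arbitrarily negative). I plan to handle this by producing a specific $\theta^\star \in \Theta$ under which $\{(X_i-\mu_i)/\sigma_i\}$ forms a classical martingale difference sequence with unit conditional variance --- such a $\theta^\star$ is exactly one realizing the attaining value in $\sup_\theta f_{i-1,\sigma_i} = 0$ adaptively at each step, and exists by measurable selection in a sufficiently rich family $\{P_\theta\}$. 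A classical Lindeberg-type martingale CLT in the spirit of Röllin (2017) then supplies $|E_{\theta^\star}[\varphi(W_n)] - E[\varphi(Z)]| \leq C(\log n+1)\|\varphi'\|/\sqrt{n}$, and since $\E[\varphi(W_n)] \geq E_{\theta^\star}[\varphi(W_n)]$, the lower bound follows.
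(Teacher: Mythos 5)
Your skeleton (Lindeberg telescoping along the heat flow $V(t_i,W_i)$, smoothing estimates of order $\|\varphi'\|/t$, the harmonic sum, and the lower bound $\sigma_i\geq\sigma_0$) matches the paper's, but there is a genuine gap at the central step, and you have in fact located it yourself: your argument only establishes the one-sided inequality. The point you are missing is that the definitions \eqref{13}, \eqref{14}, \eqref{17} are \emph{engineered} so that the conditional sublinear expectation of the leading term is exactly zero, not merely $\leq 0$: by construction $\sup_{(\mu,\sigma^2)\in\mathcal{M}_2}f_{i-1,\sigma_i}(\mu,\sigma^2)=0$, which is an equality. The paper does not Taylor-expand $V$ directly; it writes $\varphi(w)-E[\varphi(Z_{W_{k-1},t_{k-1}})]$ via the Stein equation $t_{k-1}f'(w)-(w-W_{k-1})f(w)$, kills the auxiliary Gaussian increment by integration by parts, and is left with the leading term $\xi_k\,[-f(W_{k-1}+T_{n-k+1})]+(\xi_k^2-\tfrac1n)[-f'(W_{k-1}+T_{n-k+1})]$, which Lemma \ref{l3} converts into the exact combination appearing in \eqref{17}; its conditional sublinear expectation is then $\sup_{(\mu,\sigma^2)\in\mathcal{M}_2}f_{k-1,\sigma_k}(\mu,\sigma^2)=0$ by the choice of $\sigma_k$, so the per-step estimate is two-sided via \eqref{101} and the telescoping closes on both sides. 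Your Taylor bookkeeping produces a \emph{different} linear combination of the conditional first and second moments (your $E_\theta[A_i\mid\mathcal{F}_{i-1}]=\tfrac12 f_{i-1,\sigma_i}+\tfrac12(\mu-\mu_i)\partial_xV_{i-1}/(\sigma_i\sqrt n)$, with a factor $\tfrac12$ on the second-moment part), which the definition of $\sigma_i$ does not annihilate; that mismatch is precisely what creates your ``main obstacle,'' and it is not a feature of the theorem but of your decomposition.

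Your proposed repair of the lower bound does not work. First, under Peng's notion of independence the sublinear expectation of a functional of $(X_1,\dots,X_n)$ is an iterated supremum; nothing in the hypotheses guarantees a single $\theta^\star\in\Theta$ whose conditional laws adaptively attain the step-by-step suprema, and the measurable-selection richness you invoke is an extra assumption not available here. Second, even granting such a $\theta^\star$, the attaining $(\mu^*,\sigma^{2*})$ only satisfies the single scalar identity $f_{i-1,\sigma_i}(\mu^*,\sigma^{2*})=0$; it does not give $E_{\theta^\star}[\xi_i\mid\mathcal{F}_{i-1}]=0$ and $E_{\theta^\star}[\xi_i^2\mid\mathcal{F}_{i-1}]=1/n$ separately, so $\{\xi_i\}$ is not a martingale difference array with unit conditional variances and R\"ollin's theorem does not apply. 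The fix is not to look for a realizing measure but to redo the expansion so that the leading term is exactly the combination $f_{i-1,\sigma_i}$ of \eqref{17} (as the Stein-equation route delivers), after which $A=0$ and the same remainder estimates you already have give the two-sided bound directly.
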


The proof of Theorem \ref{t1} is deferred to Section 5.2.

\begin{remark}
From the definition of $\mu_i$ in \eq{13}, the first term of $f_{i-1, b}(\mu, \sigma^2)$ in \eq{17} is $\leq 0$ for $(\mu, \sigma^2)\in \mathcal{M}_2$ in \eq{18}.
It is straightforward to show, by checking the values of the supremum in \eq{14} at the boundary points below and by the fact that
$\sup_{(\mu, \sigma^2)\in \mathcal{M}_2} [f_{i-1, b}(\mu, \sigma^2) ]$ is continuous for $b$ in a compact set in $(0,\infty)$, that in \eq{14}, if $\partial^2_{xx} V_{i-1}\geq 0$, then
\be{
\overline{\sigma}_{\mu_i}^2\leq \sigma_i^2\leq \sup_{(\mu,\sigma^2)\in \mathcal{M}_2}\big[\sigma^2+(\mu-\mu_i)^2\big],
}
and if $\partial^2_{xx} V_{i-1}< 0$, then
\be{
\inf_{(\mu,\sigma^2)\in \mathcal{M}_2}\big[\sigma^2+(\mu-\mu_i)^2\big]\leq \sigma_i^2\leq \underline{\sigma}_{\mu_i}^2.
}
Therefore, $\sigma_i^2$ is well-defined and is bounded below by $\sigma_0^2$ in \eq{51}.
\end{remark}


\begin{remark}
In Theorem \ref{t1}, if we assume that $\overline{\mu}=\underline{\mu}=:\mu$, then it is easy to check that
\be{
\sigma_i=
\begin{cases}
\overline{\sigma}_{\mu}
& \text{if}\ \partial^2_{xx} V_{i-1}\geq 0,\\
\underline{\sigma}_{\mu}
& \text{if}\
\partial^2_{xx} V_{i-1}< 0.
\end{cases}
}
If we assume further that $\varphi$ is a convex (concave resp.) function
and hence $V(t,\cdot)=E\varphi(\cdot+\sqrt{t}Z)$ is convex (concave resp.),
then $\sigma_i$ is further reduced to $\overline{\sigma}_{\mu}$ ($\underline{\sigma}_{\mu} $ resp.).
In this special case, 
Theorem \ref{t1} reduces to Corollary \ref{t0} except for the constant.
\end{remark}

\section{Representation of $G$-normal distribution}

Under  the sublinear expectation, the $G$-normal distribution $\mathcal{N}_G$ plays the same role as the classical normal distribution does in a probability space (cf. \eq{002}). 
However,   since $\mathcal{N}_G$ is linked with a fully  nonlinear PDE, which is called $G$-heat equation, generally we cannot give an explicit expression for $\mathcal{N}_G[\varphi]$ like the linear case. So it would be important to give a representation or  approximation for $\mathcal{N}_G[\varphi] $ using random variables or processes in a probability space.

Theorem \ref {t1} shows that under a certain normalization, the partial sum of  i.i.d random variables in a sublinear expectation space converges to the standard normal distribution.  Motivated by this, in this section, we give an approximation of the $G$-normal distribution by using a suitably normalized  partial sum of  i.i.d.\ random variables in a probability space. Moreover, the continuous-time counterpart provides a representation of the $G$-normal distribution using (non-time-homogeneous) SDEs. This refines a result given in \cite {DHP11}, Proposition 49, which implies that the $G$-normal distribution can be represented by It\^o integrals with respect to a Brownian motion.

\subsection {Approximation of $G$-normal distribution}
Let $X_1, X_2, \cdots$ be a sequence of i.i.d random variables with $E[X_1]=0$ and $E[X_1^2]=1$ in a probability space $(\Omega,\mathcal{F}, P)$. Suppose further that $E[|X_1|^3]<\infty$.

Denote by $\Sigma^{\mathbb{N}}_G$ the collection of all the sequences of measurable functions $\{\sigma_i\}_{i=1}^{\infty}$ with $\sigma_i: \mathbb{R}\rightarrow [\underline{\sigma},\overline{\sigma}]$ for any $i\in\mathbb{N}$.
Fix $n\in\mathbb{N}$.  For a mapping $\sigma\in\Sigma^{\mathbb{N}}_G$, set  $W^\sigma_{0,n}=0$, and, for $1\le i\le n$, set
\begin {eqnarray} \label {D2-SDE}
W^\sigma_{i,n}=W^\sigma_{i-1,n}+\sigma_{i}(W^\sigma_{i-1,n})\frac{X_i}{\sqrt{n}}.
\end {eqnarray}
Thus, we have $W^\sigma_{i,n}=\frac{1}{\sqrt{n}}\sum\limits_{k=1}^i\Big(\sigma_{k}(W_{k-1,n}^\sigma) X_k\Big)=:\frac{1}{\sqrt{n}}\sum_{k=1}^i X^{\sigma}_{k,n}$. Write $W^\sigma_n=W^\sigma_{n,n}$ for simplicity.
\begin{theorem} \label {CLT}   For any $\varphi\in lip(\mathbb{R})$, we have
\[\Big|\sup\limits_{\sigma\in\Sigma^{\mathbb{N}}_G}E[\varphi(W_n^\sigma)]-\mathcal{N}_G[\varphi]\Big|\le C_{\alpha,G}\bar{\sigma}^{2+\alpha}\|\varphi'\|\frac{E[|X_1|^{2+\alpha}]}{n^{\frac{\alpha}{2}}},\] where $\alpha\in (0,1)$, and $C_{\alpha,G}>0$ are constants depending on $\underline{\sigma}$ and $\overline{\sigma}$.
\end{theorem}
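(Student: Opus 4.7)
The plan is to prove the bound in two matching halves via a Lindeberg-type telescoping together with the time-reversed $G$-heat equation: an upper bound that holds for every $\sigma\in\Sigma^{\mathbb{N}}_G$, and a lower bound attained by an adaptively chosen $\sigma^\ast$. First I would set $\tilde u(t,x):=u(1-t,x)$, where $u$ is the viscosity solution of $\partial_t u-G(\partial^2_{xx}u)=0$ with $u(0,\cdot)=\varphi$; then $\tilde u$ satisfies $\partial_t\tilde u+G(\partial^2_{xx}\tilde u)=0$, $\tilde u(1,\cdot)=\varphi$, and $\tilde u(0,0)=\mathcal{N}_G[\varphi]$. A key input is the $C^{1+\alpha/2,\,2+\alpha}$ interior regularity of $\tilde u$ away from $t=1$ (Krylov-type estimates for uniformly parabolic fully nonlinear equations, in the sharper form recorded for the $G$-heat equation in Peng's monograph), giving
\[
[\partial^2_{xx}\tilde u(t,\cdot)]_{C^\alpha}\le C_{\alpha,G}\|\varphi'\|(1-t)^{-(1+\alpha)/2}\qquad(t\in[0,1)),
\]
with a constant depending on $\underline\sigma,\overline\sigma$, plus a matching temporal Hölder bound for $\partial_t\tilde u=-G(\partial^2_{xx}\tilde u)$.

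The decomposition to work with is
\[
E[\varphi(W_n^\sigma)]-\mathcal{N}_G[\varphi]=\sum_{i=1}^n E\bigl[\tilde u(\tau_i,W^\sigma_{i,n})-\tilde u(\tau_{i-1},W^\sigma_{i-1,n})\bigr],\qquad \tau_i=i/n.
\]
I would Taylor-expand each summand at $(\tau_{i-1},W^\sigma_{i-1,n})$. Independence of $X_i$ from $W^\sigma_{i-1,n}$ together with $E[X_i]=0$ and $E[X_i^2]=1$ makes the summand equal to
\[
\tfrac1n E\Bigl[\partial_t\tilde u+\tfrac12\sigma_i^2\,\partial^2_{xx}\tilde u\Bigr]+R_i,
\]
where the derivatives are evaluated at $(\tau_{i-1},W^\sigma_{i-1,n})$. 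The remainder $R_i$ is controlled by the $\alpha$-Hölder moduli of $\partial^2_{xx}\tilde u$ (in $x$) and $\partial_t\tilde u$ (in $t$) applied to the increments $|\sigma_i X_i/\sqrt n|$ and $1/n$, yielding
\[
|R_i|\le C_{\alpha,G}\|\varphi'\|\overline\sigma^{2+\alpha}E[|X_1|^{2+\alpha}]\,(1-\tau_{i-1})^{-(1+\alpha)/2}\,n^{-1-\alpha/2}.
\]
The Riemann sum $\tfrac1n\sum_i(1-\tau_{i-1})^{-(1+\alpha)/2}$ converges to $\int_0^1(1-t)^{-(1+\alpha)/2}dt<\infty$ for $\alpha<1$, so $\sum_i|R_i|\le C_{\alpha,G}\,\overline\sigma^{2+\alpha}\|\varphi'\|E[|X_1|^{2+\alpha}]\,n^{-\alpha/2}$.

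Substituting $\partial_t\tilde u=-G(\partial^2_{xx}\tilde u)$ recasts the main term as $\tfrac1n E[\tfrac12\sigma_i^2\partial^2_{xx}\tilde u-G(\partial^2_{xx}\tilde u)]$. The elementary inequality $\tfrac12\sigma^2 z\le G(z)$ for $\sigma\in[\underline\sigma,\overline\sigma]$, $z\in\mathbb{R}$, with equality when $\sigma=\overline\sigma$ if $z\ge 0$ and $\sigma=\underline\sigma$ if $z<0$, makes every main term nonpositive, giving $E[\varphi(W_n^\sigma)]\le\mathcal{N}_G[\varphi]+C_{\alpha,G}\overline\sigma^{2+\alpha}\|\varphi'\|E[|X_1|^{2+\alpha}]n^{-\alpha/2}$ for every $\sigma$, and hence the upper bound on $\sup_\sigma$. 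For the reverse inequality I would take the adaptive $\sigma^\ast_i(x)=\overline\sigma$ on $\{\partial^2_{xx}\tilde u(\tau_{i-1},x)\ge 0\}$ and $\sigma^\ast_i(x)=\underline\sigma$ otherwise; then every main term vanishes identically, producing $E[\varphi(W_n^{\sigma^\ast})]\ge\mathcal{N}_G[\varphi]-C_{\alpha,G}\overline\sigma^{2+\alpha}\|\varphi'\|E[|X_1|^{2+\alpha}]n^{-\alpha/2}$. Combining the two halves gives the claimed two-sided bound.

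The main obstacle is producing the regularity estimate with the precise time-singular constant displayed above: for the classical heat equation this is immediate from convolution with a Gaussian, but the $G$-heat equation is fully nonlinear, so one must invoke Krylov-Safonov/Wang-type interior estimates (or the $G$-specific bounds in the literature) and carry through the scaling that converts Lipschitz initial data into the $(1-t)^{-(1+\alpha)/2}$ growth of the Hölder seminorm. This is precisely what forces $\alpha\in(0,1)$, since $\int_0^1(1-t)^{-(1+\alpha)/2}dt$ diverges at $\alpha=1$. The remaining points are routine: measurability of $\sigma^\ast$ follows from continuity of $\partial^2_{xx}\tilde u$ on $[0,1)\times\mathbb{R}$, and the dependence of $C_{\alpha,G}$ on $\underline\sigma,\overline\sigma$ enters only through the regularity constant.
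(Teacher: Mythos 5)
Your argument is correct in outline, but it takes a genuinely different route from the paper. The paper's proof never touches the $G$-heat equation directly: it defines a sublinear expectation $\mathbb{E}[h(\xi)]:=\sup_{\sigma}E[h(X^{\sigma}_{1,n},\dots,X^{\sigma}_{n,n})]$, proves the tower identity $\mathbb{E}[\varphi(W_{i+1,n})]=\mathbb{E}\big[\mathbb{E}[\varphi(s+\xi_{i+1}/\sqrt{n})]\big|_{s=W_{i,n}}\big]$ (the delicate point there being a measurable-selection issue for the optimal $\lambda^{\varphi,n}(s)$, handled by a piecewise-constant approximation), concludes that $\sup_{\sigma}E[\varphi(W_n^{\sigma})]$ equals the sublinear expectation of a normalized i.i.d.\ sum, and then quotes Theorem 4.5 of Song (2017) as a black box for the rate. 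You instead prove the two one-sided bounds directly: a Lindeberg telescoping against the time-reversed $G$-heat equation, the inequality $\tfrac12\sigma^2 z\le G(z)$ giving the upper bound uniformly in $\sigma$, and an explicit adaptive $\sigma^{\ast}$ (measurable because $\partial^2_{xx}\tilde u(\tau_{i-1},\cdot)$ is continuous for $\tau_{i-1}<1$) annihilating the main term for the lower bound; this cleanly sidesteps the paper's measurable-selection difficulty. The trade-off is that your key input --- the interior estimate $[\partial^2_{xx}\tilde u(t,\cdot)]_{C^{\alpha}}\le C_{\alpha,G}\|\varphi'\|(1-t)^{-(1+\alpha)/2}$ with its temporal counterpart, together with the summation $\sum_{j\le n}j^{-(1+\alpha)/2}=O(n^{(1-\alpha)/2})$ yielding the $n^{-\alpha/2}$ rate --- is precisely the technical core of Song's cited theorem, so you are in effect inlining that proof rather than avoiding it; you correctly identify this Krylov-type regularity (valid here since $G$ is convex and uniformly elliptic when $\underline{\sigma}>0$) as the one non-routine ingredient, and your appeal to it is at the same level of black-boxing as the paper's appeal to Song. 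Both approaches are legitimate; the paper's yields the additional structural fact that the discrete scheme is exactly an i.i.d.\ sum under a sublinear expectation, while yours is self-contained modulo PDE regularity and makes the near-optimal control explicit.
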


The proof of Theorem \ref{CLT} is deferred to Section 5.3. 
We first express $\sup_{\sigma\in\Sigma^{\mathbb{N}}_G}E[\varphi(W_n^\sigma)]$ as a sublinear expectation of a sum of i.i.d.\ random variables. 
The theorem then follows from the error bound by \cite{So17} for \cite{Pe07}'s CLT.

\subsection {Representation of $G$-normal distribution}
Roughly speaking, the continuous-time form of Eq. (\ref{D2-SDE}) is
\begin {eqnarray} \label {phi-SDE}
dW^\sigma_t=\sigma(t,W^\sigma_t)dB_t, \ t\in(0,1],
\end {eqnarray} where $B$ is a standard Brownian motion in a filtered probability space $(\Omega,\mathcal{F}, \mathbb{F}, P)$.

Denote as $\Sigma_G$ the collection of all smooth functions $\sigma: [0,1]\times\mathbb{R}\rightarrow [\underline{\sigma},\overline{\sigma}]$ with
\[\sup\limits_{(t,x)\in[0,1]\times\mathbb{R}}|\partial_x\sigma(t,x)|<\infty.\]

For $\sigma\in\Sigma_G$, we consider the following stochastic differential equation SDE (\ref{phi-SDE}) with the initial value $x$:
\begin {eqnarray} \label {sigma-SDE}
\begin{split}
dW^{\sigma,x}_t&= \sigma(t,W^{\sigma,x}_t)dB_t,\ t\in(0,1],\\
W^{\sigma,x}_0&= x.
\end{split}
\end {eqnarray}
We write $W^{\sigma}$ for $W^{\sigma,0}$. Denote $\Theta_G:=\{P\circ (W_1^\sigma)^{-1}| \ \sigma\in\Sigma_G\}$. For a function $\sigma: [0,1]\times\mathbb{R}\rightarrow \mathbb{R}$, set $\widetilde{\sigma}(t,x)=\sigma(1-t,x)$.
\begin{theorem} \label{t10} For any $\varphi\in lip(\mathbb{R})$, we have
\[\mathcal{N}_G[\varphi]=\sup_{\mu\in\Theta_G}\mu[\varphi].\]
\end{theorem}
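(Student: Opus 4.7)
My plan uses the PDE characterization of $\mathcal{N}_G[\varphi]$ together with It\^o's formula. Let $V(t,x)$ denote the viscosity solution of the $G$-heat equation $\partial_t V = G(\partial^2_{xx} V)$ with $V(0,\cdot) = \varphi$, so that $\mathcal{N}_G[\varphi] = V(1,0)$. Both functionals $\varphi \mapsto \mathcal{N}_G[\varphi]$ and $\varphi \mapsto E[\varphi(W_1^\sigma)]$ are Lipschitz in $\varphi$ under the supremum norm on compacts (uniformly in $\sigma \in \Sigma_G$), so after mollifying $\varphi$ by convolution I may assume $\varphi \in C^\infty_b(\mathbb{R})$, in which case the regularity theory for the $G$-heat equation gives $V \in C^{1,2}([0,1] \times \mathbb{R})$ with all relevant derivatives bounded.

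For the inequality $\sup_{\mu \in \Theta_G} \mu[\varphi] \leq \mathcal{N}_G[\varphi]$, fix $\sigma \in \Sigma_G$ and apply It\^o's formula to $V(1-t, W^\sigma_t)$ on $[0,1]$. The $dt$-coefficient equals
\[
\bigl[\tfrac{1}{2}\sigma^2(t, W^\sigma_t)\,\partial^2_{xx} V - G(\partial^2_{xx} V)\bigr](1-t, W^\sigma_t),
\]
which is $\leq 0$ because $G(\alpha) = \sup_{s \in [\underline{\sigma},\overline{\sigma}]} \tfrac{1}{2}s^2 \alpha$ and hence $\tfrac{1}{2}s^2 \alpha \leq G(\alpha)$ for every $s \in [\underline{\sigma}, \overline{\sigma}]$ and every $\alpha \in \mathbb{R}$. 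Since $\sigma \partial_x V$ is bounded, the stochastic integral is a true martingale, so $V(1-t, W^\sigma_t)$ is a supermartingale. Taking expectations at $t = 0, 1$ yields $E[\varphi(W_1^\sigma)] = E[V(0, W_1^\sigma)] \leq V(1, 0) = \mathcal{N}_G[\varphi]$, and passing to the supremum over $\sigma \in \Sigma_G$ proves one direction.

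For the reverse direction, the natural ``bang-bang'' feedback
\[
\sigma^*(t, x) := \overline{\sigma}\,\mathbbm{1}_{\{\partial^2_{xx} V(1-t,x) \geq 0\}} + \underline{\sigma}\,\mathbbm{1}_{\{\partial^2_{xx} V(1-t,x) < 0\}}
\]
makes the drift in the above It\^o computation vanish and would give equality, but $\sigma^*$ is only Borel measurable, so $\sigma^* \notin \Sigma_G$. My plan is to mollify in both variables: pick a smooth kernel $\rho_\epsilon$ and let $\sigma^\epsilon$ be the corresponding mollification of $\sigma^*$, which lies in $\Sigma_G$ and takes values in $[\underline{\sigma}, \overline{\sigma}]$. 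The same It\^o expansion yields
\[
\bigl|V(1,0) - E[\varphi(W_1^{\sigma^\epsilon})]\bigr| \leq \tfrac{1}{2}\bnorm{\partial^2_{xx} V}_\infty\, E \int_0^1 \babs{(\sigma^\epsilon)^2 - (\sigma^*)^2}(t, W^{\sigma^\epsilon}_t)\,dt,
\]
and I will show the right-hand side tends to zero as $\epsilon \to 0$ by combining pointwise a.e.\ convergence $\sigma^\epsilon \to \sigma^*$ on $[0,1] \times \mathbb{R}$ with Aronson / Krylov-type upper Gaussian bounds on the density of $W^{\sigma^\epsilon}_t$ (uniform in $\epsilon$, since the diffusion coefficient is uniformly elliptic on $[\underline{\sigma}^2, \overline{\sigma}^2]$).

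The main obstacle is precisely this lower bound: the optimal feedback is not smooth, so the argument must reconcile the regularity required by $\Sigma_G$ with the discontinuity of the optimizer, and must control how the approximation error in $\sigma$ propagates through the SDE. A parallel route, given that Theorem \ref{CLT} is already available, is to take $X_i = \sqrt{n}(B_{i/n} - B_{(i-1)/n})$ (i.i.d.\ standard Gaussians), extract a near-optimal discrete control $\sigma^{(n)} \in \Sigma^{\mathbb{N}}_G$ from the supremum in Theorem \ref{CLT}, and produce a smooth $\sigma \in \Sigma_G$ by mollifying the piecewise-constant-in-time extension of $\sigma^{(n)}$; standard strong Euler-scheme error bounds then give $|E\varphi(W_1^\sigma) - E\varphi(W_n^{\sigma^{(n)}})| \to 0$ as $n \to \infty$, and the conclusion follows.
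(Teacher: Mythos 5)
Your proof has the same architecture as the paper's: the inequality $\sup_{\mu\in\Theta_G}\mu[\varphi]\le\mathcal{N}_G[\varphi]$ follows by comparison (your supermartingale argument is the probabilistic form of the paper's observation that $w_n=u-v_n\ge 0$), and the reverse inequality is obtained by smoothing the discontinuous bang-bang feedback --- the paper's $\sigma_\varphi$, equal to $\overline{\sigma}$ or $\underline{\sigma}$ according to the sign of $\partial^2_{xx}u(1-t,x)$ --- into elements of $\Sigma_G$ and showing that the resulting error, which in both treatments is an integral of $|\sigma_n^2-\sigma_\varphi^2|\,|\partial^2_{xx}u|$ against an occupation measure, tends to zero. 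The difference lies only in the tool used for that last step: the paper applies the Aleksandrov--Bakel'man--Pucci--Krylov maximum principle to $w_n$, bounding $\sup w_n$ by the $L^2([\epsilon,1]\times\mathbf{B}(R))$ norm of $\varepsilon_n\partial_{xx}^2u$ plus the tail terms $\mathbf{M}(R)+\mathbf{m}(\epsilon)$, and thereby needs only the \emph{interior} regularity of the $G$-heat equation; you use It\^o's formula plus a Krylov-type occupation estimate for the uniformly elliptic diffusions $W^{\sigma^\epsilon}$, which is an essentially equivalent $L^2\to L^\infty$ bound with constants uniform in $\epsilon$. The one step of your plan that is asserted rather than earned is that mollifying $\varphi$ yields $\partial^2_{xx}V$ bounded on all of $[0,1]\times\mathbb{R}$: under a sublinear expectation only the one-sided bound $V(t,x+h)+V(t,x-h)-2V(t,x)\ge-\|\varphi''\|\,h^2$ propagates from the initial datum by subadditivity (the matching upper bound does not follow the same way), so you still need the Evans--Krylov interior estimate for the convex operator $G$, and must then either localize away from the initial time as the paper's $\mathbf{m}(\epsilon)$ term does, or check that the interior bound on $\partial^2_{xx}V(s,\cdot)$ degenerates slowly enough as $s\downarrow 0$ that your drift integral over $t\in[0,1]$ stays finite. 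With that point supplied your argument is complete; the closing alternative via Theorem \ref{CLT} is also viable but requires the same care in passing from discrete controls to $\Sigma_G$.
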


\begin {remark} Note that in the above representation, we need to use non-time-homogeneous SDEs. If we only consider time-homogeneous SDEs, the representation will be strictly smaller than the $G$-normal distribution.
\end {remark}

\section{Proofs}

\subsection{Proofs in Section 2}

In this subsection, we first prove Theorem \ref{t4} and then use it to prove Theorem \ref{t9}. Finally, we provide a simple explanation for Remark \ref{r3}.

\begin{proof}[Proof of Theorem \ref{t4}]
Denote
\be{
Y_0=0,\quad Y_k:=\sum_{i=1}^k \xi_i :=\sum_{i=1}^k\frac{ (X_i-\mu_i)}{n},
}
and denote
\be{
Y_{[k]}:=\{Y_1,\dots, Y_k\}.
}
For arbitrary random vectors $X$ and $Y$, denote
\be{
\E^X[\varphi(X, Y)]:=\big\{ \E[\varphi(x,Y)] \big\}_{x=X}.
}
We will prove the following claim.

\begin{claim}\label{claim1}
For any $k=1,\dots, n$, we have
\be{
\Big| \E^{Y_{[k-1]}} [\varphi(Y_k)-\varphi(Y_{k-1})] \Big|\leq \frac{\lambda_* \big\{ \sup_{\theta\in \Theta} E_\theta \big[|X_1-E_\theta[X_1]|^2 \big]+diam^2(\mathcal{P})\big\}}{2n^2}
}
\end{claim}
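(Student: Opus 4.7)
The plan is to perform a second-order Taylor expansion of $\varphi$ around $Y_{k-1}$ and to exploit the defining property of $\mu_k$ (argsup over $\mathcal{P}$) to make the linear term vanish under $\E^{Y_{[k-1]}}$. First I fix a realization $Y_{[k-1]}=y$; this freezes both $v:=D\varphi(y_{k-1})\in\IR^d$ and $\mu:=\mu_k(y)\in\mathcal{P}$, leaving only $X_k$ random. Since $D\varphi$ is Lipschitz with constant $\lambda_*$, Taylor's theorem yields
\be{
\varphi(Y_k)-\varphi(Y_{k-1}) \,=\, v\cdot \xi_k + R_k, \qquad |R_k|\leq \tfrac{\lambda_*}{2}|\xi_k|^2 = \tfrac{\lambda_*}{2n^2}|X_k-\mu|^2.
}

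The key step is to show $\E[v\cdot\xi_k]=0$. Because $v\cdot\mu$ is a deterministic constant once $y$ is fixed, property \eq{33} applied to $v\cdot X_k$ and the constant $-v\cdot\mu$ gives
\be{
\E[v\cdot \xi_k] \,=\, \tfrac{1}{n}\bigl(\E[v\cdot X_k] - v\cdot \mu\bigr) \,=\, \tfrac{1}{n}\Bigl(\sup_{\theta\in\Theta} v\cdot E_\theta[X_1] \,-\, \sup_{\eta\in \mathcal{P}} v\cdot\eta\Bigr).
}
A linear functional on the convex hull of a (closed) set attains the same supremum as on the set itself, so $\sup_{\eta\in\mathcal{P}} v\cdot\eta = \sup_{m\in\mathcal{M}_1} v\cdot m = \sup_{\theta} v\cdot E_\theta[X_1]$, and the right-hand side vanishes. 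Applying \eq{101} with $X:=v\cdot\xi_k$ and $Y:=R_k$ then produces
\be{
\bigl|\E^{Y_{[k-1]}}[\varphi(Y_k)-\varphi(Y_{k-1})]\bigr| \,\leq\, \E[|R_k|] \,\leq\, \tfrac{\lambda_*}{2n^2}\E[|X_k-\mu|^2].
}

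To finish, I bound $\E[|X_k-\mu|^2]$. For each $\theta\in\Theta$, the variance decomposition $E_\theta[|X_k-\mu|^2]=E_\theta[|X_k-E_\theta X_k|^2]+|E_\theta X_k-\mu|^2$ holds, and since both $E_\theta X_k$ and $\mu$ lie in $\mathcal{P}$, the second term is bounded by $\mathrm{diam}^2(\mathcal{P})$. Taking $\sup_\theta$ and using $\sup(a+b)\leq \sup a+\sup b$ gives the bound stated in the claim.

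The subtlest point is the vanishing of $\E[v\cdot\xi_k]$: sublinear expectations are not additive in general, and this step works only because $v\cdot\mu$ carries no mean uncertainty (it is a deterministic function of the frozen $y$), which allows \eq{33} to be invoked, and because the choice of $\mu$ as the argsup over the \emph{convex hull} $\mathcal{P}$ (rather than over $\mathcal{M}_1$ alone) is precisely what makes the two suprema above coincide.
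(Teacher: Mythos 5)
Your proposal is correct and follows essentially the same route as the paper: a Taylor expansion around $Y_{k-1}$, annihilation of the linear term via the argsup definition of $\mu_k$ together with the fact that a linear functional has the same supremum over $\mathcal{M}_1$ as over its closed convex hull $\mathcal{P}$, and a variance decomposition to bound $\E[|X_k-\mu_k|^2]$ by $\sup_\theta E_\theta[|X_1-E_\theta X_1|^2]+diam^2(\mathcal{P})$. The only cosmetic differences are that you use the Lagrange-type remainder bound from the Lipschitz gradient where the paper writes the double-integral remainder, and you invoke \eqref{33} explicitly for the constant shift $v\cdot\mu$; these do not change the argument.
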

Using telescoping sum and the independence assumption and applying Claim \ref{claim1} recursively from $k=n$ to $k=1$, we have
\bes{
&\E[\varphi(Y_n)]-\varphi(0)\\
=&\E\Big\{ \sum_{k=1}^n [\varphi(Y_k)-\varphi(Y_{k-1})]  \Big\}\\
=&\E \E^{Y_{[n-1]}} \Big\{ \sum_{k=1}^n [\varphi(Y_k)-\varphi(Y_{k-1})]  \Big\}\\
=&\E\Big\{  \sum_{k=1}^{n-1} [\varphi(Y_k)-\varphi(Y_{k-1})]  + \E^{Y_{[n-1]}} [\varphi(Y_n)-\varphi(Y_{n-1})]  \Big\}\\
\leq & \E\Big\{  \sum_{k=1}^{n-1} [\varphi(Y_k)-\varphi(Y_{k-1})]   \Big\} + \frac{\lambda_* \big\{ \sup_{\theta\in \Theta} E_\theta \big[|X_1-E_\theta[X_1]|^2 \big]+diam^2(\mathcal{P})\big\}}{2n^2}\\
\leq & \dots \\
\leq &\frac{\lambda_* \big\{ \sup_{\theta\in \Theta} E_\theta \big[|X_1-E_\theta[X_1]|^2 \big]+diam^2(\mathcal{P})\big\}}{2n}.
}
The lower bound is proved by changing $\leq$ to $\geq$ and changing $+$ to $-$ for the error terms. 
Therefore, we obtain Theorem \ref{t4}, subject to Claim \ref{claim1}.

To prove Claim \ref{claim1}, we first write
\bes{
&\E^{Y_{[k-1]}} \big[\varphi(Y_k)-\varphi(Y_{k-1})\big]\\
=&\E^{Y_{k-1}} \big[\varphi(Y_k)-\varphi(Y_{k-1})\big]\\
=&\E^{Y_{k-1}}\Big [\xi_k \cdot D\varphi(Y_{k-1})+\int_0^1 \int_0^1  \xi_k^T D^2\varphi(Y_{k-1}+\alpha \beta \xi_k) \xi_k \alpha d\alpha d\beta\Big].
}
By the property \eq{101} of the sublinear expectation and the definition of $\lambda_*$, we have
\bes{
&\Big|\E^{Y_{[k-1]}} \big[\varphi(Y_k)-\varphi(Y_{k-1})\big]-  \E^{Y_{k-1}} \big[\xi_k \cdot D\varphi(Y_{k-1})\big]    \Big|\\
\leq & \E^{Y_{k-1}}\Big| \int_0^1 \int_0^1  \xi_k^T D^2\varphi(Y_{k-1}+\alpha \beta \xi_k) \xi_k \alpha d\alpha d\beta   \Big|\\
\leq &  \frac{1}{2} \lambda_*  \E\big[|\xi_k|^2\big].
}
Note that
\bes{
\E\big[|X_k-\mu_k|^2\big]=&\sup_{\theta\in \Theta} E_\theta \big[|X_k-\mu_k|^2\big]  \\
=&\sup_{\theta\in \Theta}\big\{ E_\theta\big[|X_k-E_\theta(X_k)|^2\big]+ \big|E_\theta[X_k]-\mu_k\big|^2\big\}\\
\leq & \sup_{\theta\in \Theta} E_\theta\big[|X_1-\mu_\theta|^2\big]+diam^2(\mathcal{P}).
}
Hence, 
\besn{\label{claim1-1}
&\Big|\E^{Y_{[k-1]}} \big[\varphi(Y_k)-\varphi(Y_{k-1})\big]-  \E^{Y_{k-1}} \big[\xi_k \cdot D\varphi(Y_{k-1})\big]    \Big|\\
\leq & \frac{\lambda_* \big\{ \sup_{\theta\in \Theta} E_\theta \big[|X_1-E_\theta[X_1]|^2 \big]+diam^2(\mathcal{P})\big\}}{2n^2}.
}
By the definition of sublinear expectation,
\be{
\E^{Y_{k-1}}\big[X_k \cdot D\varphi(Y_{k-1})\big]=\sup_{\mu\in \mathcal{M}_1} \mu \cdot  D\varphi(Y_{k-1}).
}
As $\mathcal{M}_1\subset \mathcal{P}$, it is clear that
\be{
\sup_{\mu\in \mathcal{M}_1} \mu \cdot  D\varphi(Y_{k-1})\leq \sup_{\mu\in \mathcal{P}} \mu \cdot  D\varphi(Y_{k-1}).
}
On the other hand, for $\lambda_1, \lambda_2\geq 0$ such that $\lambda_1+\lambda_2=1$ and $\mu_1, \mu_2\in \overline{\mathcal{M}}_1$, the closure of $\mathcal{M}_1$,
\be{
(\lambda_1 \mu_1 +\lambda_2 \mu_2)\cdot D\varphi(Y_{k-1}) \leq \sup_{\mu\in \overline{\mathcal{M}}_1} \mu\cdot D\varphi(Y_{k-1})=\sup_{\mu\in \mathcal{M}_1} \mu \cdot  D\varphi(Y_{k-1}).
}
Therefore,
\be{
\E^{Y_{k-1}}\big[X_k \cdot D\varphi(Y_{k-1})\big]=\sup_{\mu\in \mathcal{M}_1} \mu \cdot  D\varphi(Y_{k-1})=\sup_{\mu\in \mathcal{P}} \mu \cdot  D\varphi(Y_{k-1}),
}
and by the choice of $\mu_k$, we have
\be{
\E^{Y_{k-1}}\big[\xi_k \cdot D\varphi(Y_{k-1})\big]=0.
}
This, together with \eq{claim1-1}, proves Claim \ref{claim1}.

\end{proof}

\begin{proof}[Proof of Theorem \ref{t9}]
Here, $\mathcal{P}$ is a bounded convex polytope with $m$ vertices.
Denote the set of vertices by $\mathcal{V}$.
For each vertex $v\in \mathcal{V}$, define
\be{
T_v=\{w\in \mathbb{R}^d: w-v=c(u-v)\ \text{for some}\ u\in \mathcal{P}\ \text{and}\ c\geq 0\}.
}
It is clear that $\mathcal{P}=\cap_{v\in \mathcal{V}} T_v$ where the intersection is over all the $m$ vertices.
(Just to clarify the definitions, consider, for example, $d=1$ and $\mathcal{P}=[\underline{\mu},\overline{\mu}]$. It has two vertices $\mathcal{V}=\{\underline{\mu},\overline{\mu}\}$. Thus, we have $T_{\underline{\mu}}=[\underline{\mu},\infty)$, $T_{\overline{\mu}}=(-\infty, \overline{\mu}]$ and $\mathcal{P}=T_{\underline{\mu}} \cap T_{\overline{\mu}}$.)
We will prove that
\ben{\label{c3}
d_{T_v}^2\big(\frac{\sum_{i=1}^n X_i}{n}\big)\leq \frac{\sup_{\theta\in \Theta} E_\theta\big[|X_1-E_\theta(X_1)|^2\big]+diam^2(\mathcal{P})}{n}
}
and hence
\be{
d_{\mathcal{P}}^2\big(\frac{\sum_{i=1}^n X_i}{n}\big)\leq\sum_{v\in \mathcal{V}} d_{T_v}^2\big(\frac{\sum_{i=1}^n X_i}{n}\big) \leq \frac{m\big\{\sup_{\theta\in \Theta} E_\theta\big[|X_1-E_\theta(X_1)|^2\big]+diam^2(\mathcal{P})\big\}}{n}.
}
To prove \eq{c3}, we take the function $\varphi$ in Theorem \ref{t4} to be
\be{
\varphi(x)=d_{T_v-v}^2(x),
}
where $T_v-v=\{u-v: u\in T_v\}$.
We will prove the following lemma.
\begin{lemma}\label{l4}
For this $\varphi$, we have that $\varphi$ is differentiable, $D\varphi: \mathbb{R}^d\to \mathbb{R}^d$ is a Lipschitz function,
\ben{\label{41}
v\cdot D\varphi(x)=\sup_{\mu\in \mathcal{P}} \{\mu \cdot D\varphi(x)\},\ \text{for all}\ x\in \mathbb{R}^d,
}
and
\ben{\label{42}
\lambda_*=2.
}
\end{lemma}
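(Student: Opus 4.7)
The plan is to write $K := T_v - v$ for brevity and to exploit two standard facts from convex analysis: the $C^{1,1}$-regularity of squared distance functions to closed convex sets, and the polar cone characterization of the projection residual onto a closed convex cone. From the definition of $T_v$ given just above, $K$ is the conical hull of $\mathcal{P} - v$ (the set of non-negative multiples of vectors $u - v$ with $u \in \mathcal{P}$), and in particular it is a closed convex cone with $\mathcal{P} - v \subseteq K$.

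For differentiability and the Hessian bound \eq{42}, I would invoke the classical identity $D(d_C^2)(x) = 2\bigl(x - P_C(x)\bigr)$, valid for any closed convex $C$ with $P_C$ the metric projection. Applied to $C = K$, this gives $D\varphi(x) = 2(x - P_K(x))$, so $\varphi \in C^1$. Using the firm non-expansiveness $(x-y) \cdot (P_K(x) - P_K(y)) \geq |P_K(x) - P_K(y)|^2$, a direct expansion of $|D\varphi(x) - D\varphi(y)|^2 = 4|(x-y) - (P_K(x) - P_K(y))|^2$ yields $|D\varphi(x) - D\varphi(y)| \leq 2|x-y|$. Hence $D\varphi$ is $2$-Lipschitz, and in the a.e.\ sense the Hessian satisfies $0 \preceq D^2\varphi \preceq 2I$, giving $\lambda_* = 2$.

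For \eq{41}, since $K$ is a closed convex cone, Moreau's decomposition (equivalently, the variational characterization of the projection onto a cone) gives $x - P_K(x) \in K^\circ := \{y \in \mathbb{R}^d : y \cdot k \leq 0 \text{ for all } k \in K\}$. Therefore $D\varphi(x) \cdot k \leq 0$ for every $k \in K$. For any $\mu \in \mathcal{P}$ we have $\mu - v \in \mathcal{P} - v \subseteq K$, hence $\mu \cdot D\varphi(x) \leq v \cdot D\varphi(x)$. Since $v \in \mathcal{P}$ (being a vertex), the supremum $\sup_{\mu \in \mathcal{P}} \mu \cdot D\varphi(x)$ is attained at $\mu = v$, proving \eq{41}.

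The only mild obstacle is cleanly invoking the classical convex-analysis facts (the $C^{1,1}$-regularity of $d_C^2$, firm non-expansiveness of the projection, and the polar-cone identity); none depends on anything beyond $K$ being a closed convex cone, so no structural hypothesis on the polytope $\mathcal{P}$ beyond the one already used in the proof is needed.
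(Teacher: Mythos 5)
Your proof is correct, and it takes a genuinely different route from the paper's. The paper argues concretely: after reducing to $v=0$, it partitions the exterior region $\{x: d(x,T_0)>0\}$ according to whether the nearest point $x_0$ lies at the vertex, on an ``edge'', or on a ``surface'' of the cone, changes coordinates on each piece so that $\varphi(y)=y_1^2+\dots+y_{d_1}^2$, and reads off $D\varphi(y)=2(y-y_0)$, the explicit diagonal Hessian, and the bound $\|D^2\varphi\|_{op}\leq 2$; the identity \eq{41} is then obtained from the two inequalities $\sup_{\mu\in\mathcal{P}}\mu\cdot D\varphi(x)\geq 0$ (since $0\in\mathcal{P}$) and $\mu\cdot D\varphi(x)\leq 0$ (inward vector against outward gradient). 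You instead invoke three standard convex-analysis facts for the closed convex cone $K=T_v-v$: the identity $D(d_K^2)=2(\mathrm{Id}-P_K)$, firm non-expansiveness of $P_K$ (which gives the $2$-Lipschitz gradient and hence $\lambda_*=2$ in one computation), and the Moreau/polar-cone relation $x-P_K(x)\in K^\circ$, which yields $(\mu-v)\cdot D\varphi(x)\leq 0$ for all $\mu\in\mathcal{P}$ and thus \eq{41} in a single stroke. Your argument is shorter, avoids the case analysis and coordinate changes entirely, and works verbatim for any closed convex cone (closedness of $K$ does hold here since it is finitely generated by the vertices of the polytope, though for a general convex base the conical hull need not be closed, so that one sentence deserves the word ``polyhedral''). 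What the paper's computation buys in exchange is a fully self-contained elementary proof, an explicit description of the Hessian showing where the operator norm $2$ is actually attained, and a transparent geometric picture; both proofs treat the non-$C^2$ interface points with the same (acceptable) level of informality, yours via the ``a.e.'' Hessian of a $C^{1,1}$ function, the paper's by computing region by region.
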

On the basis of this lemma, we can take $\mu_i=v$ for all $i$ in Theorem \ref{t4}.
This implies the following:
\be{
\Big|\E\big\{\varphi\big[\frac{\sum_{i=1}^n (X_i-v)}{n}\big]\big\}\Big|\leq \frac{ \sup_{\theta\in \Theta} E_\theta\big[|X_1-\mu_\theta|^2\big]+diam^2(\mathcal{P})}{n}.
}
The left-hand side is precisely $d_{T_v}^2(\frac{\sum_{i=1}^n X_i}{n})$; hence, we obtain \eq{c3}.
\end{proof}
We now prove Lemma \ref{l4}.
\begin{proof}[Proof of Lemma \ref{l4}]
Without loss of generality, we assume that $v=0$; hence, $T_v-v=T_v=T_0$.
For each $x$ such that $d(x,T_0)>0$, define
\be{
x_0=\text{arginf}_{y\in T_0}|x-y|.
}
Because of the convexity of $T_0$,
$x_0$ is unique for each $x$, and moreover, $x_0$ as a function of $x$ is continuous.
Based on this definition,
\be{
\varphi(x)=|x-x_0|^2.
}
Let $\mathcal{E}$ and $\mathcal{S}$ denote the set of ``edges" and ``surfaces" of $T_0$, respectively.
The $d$-dimensional set $\mathcal{R}=\{x: d(x, T_0)>0\}$ can be divided into a finite number of disjoint parts as
\be{
\mathcal{R}=\mathcal{R}_0 \cup (\cup_{e\in \mathcal{E}} \mathcal{R}_e)\cup (\cup_{s\in \mathcal{S}} \mathcal{R}_s),
}
where
\be{
\mathcal{R}_0=\{x\in \mathcal{R}: x_0=0\},
}
\be{
\mathcal{R}_e=\{x\in \mathcal{R}: x_0\in e, x_0\ne 0\},
}
and
\be{
\mathcal{R}_s=\{x\in \mathcal{R}: x_0\in s, x_0\notin e \ \text{for any}\ e\in \mathcal{E}\}.
}
For each $x\in \mathcal{R}_s$, we change the coordinates such that $x_0$ is the origin and regard $\mathbb{R}^d$ as $s^\perp \bigotimes s$, where $s^\perp$ is the orthogonal space of $s$.
Suppose that $s^\perp$ is $d_1$-dimensional.
Then, under this new coordinate system and for $y\in \mathcal{R}_s$, we have
\be{
\varphi(y)=y_1^2+\dots + y_{d_1}^2.
}
Hence
\be{
D \varphi(y)=2(y_1,\dots, y_{d_1}, 0,\dots, 0)^T=2(y-y_0),
}
$D^2 \varphi(y)$ is a diagonal matrix with the first $d_1$ diagonal entries being $2$ and the rest being $0$, and $||D^2\varphi(y)||_{op}\leq 2.$
Similar arguments and results apply to $x\in \mathcal{R}_e$ and to $x\in \mathcal{R}_0$.
Recall that $y_0$ is a continuous function of $y$. We conclude that $D\varphi$ is continuous.
Therefore, we have \eq{42}.

We now prove \eq{41}.
Recall that we assumed that $v=0$.
On one hand, as $0\in \mathcal{P}$,
$$\sup_{\mu\in \mathcal{P}}\mu \cdot D\varphi(x)\geq 0.$$
On the other hand, by considering $x\in \mathcal{R}_0, \mathcal{R}_e, \mathcal{R}_s$ separately as above,
as $\mu\in \mathcal{P}$ points ``inwards" and $D\varphi(x)=2(x-x_0)$ points ``outwards", it is clear that
\be{
\mu \cdot D\varphi(x)\leq 0,
}
which proves \eq{41}.
\end{proof}

\begin{proof}[Proof of Remark \ref{r3}] 
Let $\mathbf{B}_0(R)$ denote a disk of radius $R$ in a plane.
For $m\in \mathbb{N}$, denote as $P_m$ a  regular $m$-sided polygon with $\mathbf{B}_0(R)$ as the inscribed circle (see Figure 1 below).
\begin {center}
\begin{tikzpicture}[scale=1.5]
\draw (0,0) circle (0.866)[thick];

\foreach \i in {0,1,2,3,4,5}
{
\draw (\i * 60:1) edge[thick]
({(\i + 1) * 60}:1);
}

\draw (120:1) edge [left] node {\tiny $r_m$}
(0,0);

\draw (0,0.866) edge [right] node {\tiny R}
(0,0);

\node at (0, -1.3) {Figure 1: $\mathcal{P}$ \& $P_m$};
\end{tikzpicture}

\end {center}
Write $r_m$ as the radius of the regular $m$-sided polygon. Then, $r_m=\frac{R}{\cos\frac{\pi}{m}}$. 
We can easily check that
\[r_m-R\leq \frac{7\pi^2R}{m^2} \ \emph{for} \  m\ge 3\] and
\[\lim_{m\rightarrow+\infty}m^2(r_m-R)= \frac{\pi^2R}{2}.\] 
Now, we expand the set $\Theta$ as $\Theta_m$ such that  $\{E_\theta [X_1]: \theta\in \Theta_m\}=P_m$ and
\[\sup_{\theta\in \Theta_m} E_\theta[|X_1-E_\theta[X_1]|^2]=\sup_{\theta\in \Theta} E_\theta[|X_1-E_\theta[X_1]|^2]=:\bar{\sigma}^2.\]
Set  $\mathbb{E}_m=\sup_{\theta\in \Theta_m}E_\theta$. Then,
\begin {eqnarray*}
\mathbb{E}[d_{\mathcal{P}}(\overline{X}_n)]&\le& \mathbb{E}_m[d_{\mathcal{P}}(\overline{X}_n)]\\
&\le& \mathbb{E}_m[d_{P_m}(\overline{X}_n)]+r_m-R\\
&\le& r_m-R+\sqrt{\frac{m}{n}}\sqrt{\overline{\sigma}^2+4r^2_m}.
\end {eqnarray*}
By setting $m=n^{\frac{1}{5}}$, we have
\[\mathbb{E}[d_{\mathcal{P}}(\overline{X}_n)]\le (7\pi^2R+\sqrt{\overline{\sigma}^2+16R^2}) n^{-\frac{2}{5}}\]
and
\[\limsup_{n\rightarrow+\infty}\Big(n^{\frac{2}{5}}\mathbb{E}[d_{\mathcal{P}}(\overline{X}_n)]\Big)\le \frac{\pi^2R}{2}+\sqrt{\overline{\sigma}^2+4R^2}.\]

\end {proof}

\subsection{Proofs in Section 3}

In this subsection, we first introduce Stein's method, which is our main tool for proving the results presented in Section 3. Then, we prove Theorem \ref{t1}. Finally, we discuss the modification of the proof of Theorem \ref{t1} for obtaining Theorem \ref{t6}.

\subsubsection{Stein's method for distributional approximations}

Stein's method was introduced by \cite{St72} for distributional approximations.
The book by \cite{ChGoSh10} contains an introduction to Stein's method and many recent advances.
Here, we will explain the basic ideas in the context of normal approximation.

Let $W$ be a random variable with mean $x$ and variance $t>0$, and let $Z_{x, t}\sim N(x, t)$ be a Gaussian random variable.
The Wasserstein distance between their distributions is defined as
\ben{\label{22}
\sup_{\varphi\in lip(\mathbb{R}): ||\varphi'||\leq 1} \big\{E [\varphi(W)]-E[\varphi(Z_{x,t})]\big\}.
}
Inspired by the fact that $Y\sim N(x, t)$ if and only if
\ben{\label{21}
E[(Y-x)f(Y)]=t E[f'(Y)]
}
for all absolutely continuous functions $f$ for which the above expectations exist,
we consider the following Stein equation:
\ben{\label{1}
t f'(w)-(w-x)f(w)=\varphi(w)-E\varphi(Z_{x, t}).
}
A bounded solution to \eq{1} is known to be
\ben{\label{2}
f_{\varphi}(w)=\frac{1}{\sqrt{t}}e^{\frac{(w-x)^2}{2t}}\int_{-\infty}^{\frac{w-x}{\sqrt{t}}} e^{-y^2/2}\Big\{\varphi(x+\sqrt{t} y)-E\big[\varphi(x+\sqrt{t} Z)\big]\Big\} dy.
}
Hereafter, we denote the standard Gaussian random variable $Z_{0,1}$ as $Z$.
Setting $w=W$ and taking the expectation on both sides of \eq{1}, we have
\besn{\label{23}
&\sup_{\varphi\in lip(\mathbb{R}): ||\varphi'||\leq 1} \big\{ E[\varphi(W)]-E[\varphi(Z_{x,t})] \big\}\\
=&\sup_{\varphi\in lip(\mathbb{R}): ||\varphi'||\leq 1} E\big\{ t f_{\varphi}'(W)-(W-x)f_{\varphi}(W)\big\}.
}
The Wasserstein distance between the distribution of $W$ and $N(x,t)$ is then bounded  by using the properties of $f_\varphi$ and by exploiting the dependence structure of $W$.

We will need to use the following properties of $f_\varphi$.
The first lemma provides an upper bound for $f''_\varphi$.
\begin{lemma}\label{l2}
For the solution \eq{2} to Stein's equation \eq{1}, we have
\ben{\label{25}
||f_\varphi''|| \leq \frac{2}{t}||\varphi'||.
}
\end{lemma}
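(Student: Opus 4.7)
The plan is to reduce the bound to the well-known estimate for the standard-normal Stein equation via the linear change of variables $u=(w-x)/\sqrt{t}$. Specifically, set $g(u):=f_\varphi(x+\sqrt{t}\,u)$ and $h(u):=\varphi(x+\sqrt{t}\,u)$. A direct substitution into \eqref{1} shows that $g$ solves the standard Stein equation
$$
g'(u)-u\,g(u)=\frac{1}{\sqrt{t}}\bigl[h(u)-E\,h(Z)\bigr],
$$
and the explicit formula \eqref{2} for $f_\varphi$ transforms, after the same substitution, into the classical integral representation of the solution $g$ to this equation with test function $\psi:=h/\sqrt{t}$.

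Next, I would invoke (or briefly rederive) the standard Wasserstein-regime bound for the $N(0,1)$ Stein equation: if $g$ solves $g'(u)-u\,g(u)=\psi(u)-E\,\psi(Z)$ with $\psi$ Lipschitz, then $||g''||\leq 2\,||\psi'||$. The efficient route is to differentiate the Stein equation to obtain $g''(u)=\psi'(u)+g(u)+u\,g'(u)$, and then establish, via integration by parts in the integral representation, the key Wasserstein-regime identity $||g(u)+u\,g'(u)||\leq ||\psi'||$. Combined with $||\psi'||$ itself from the $\psi'(u)$ term, this yields the factor of $2$. Applied with $\psi=h/\sqrt{t}$, it gives $||g''||\leq \tfrac{2}{\sqrt{t}}\,||h'||$.

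Finally I would translate back through the chain rule: $g''(u)=t\,f_\varphi''(x+\sqrt{t}\,u)$ and $h'(u)=\sqrt{t}\,\varphi'(x+\sqrt{t}\,u)$, whence $||g''||=t\,||f_\varphi''||$ and $||h'||=\sqrt{t}\,||\varphi'||$. Substituting yields
$$
t\,||f_\varphi''||\;\leq\;\frac{2}{\sqrt{t}}\bigl(\sqrt{t}\,||\varphi'||\bigr)\;=\;2\,||\varphi'||,
$$
which rearranges to \eqref{25}. The only nontrivial step in the program is the classical $||g''||\leq 2||\psi'||$ estimate for the standard Stein equation; once that is granted, the remainder is pure rescaling bookkeeping, and the main obstacle is bounding the combination $g(u)+u\,g'(u)$ uniformly in $u$, which is the standard technical heart of the Wasserstein-regime Stein bound.
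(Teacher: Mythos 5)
Your proposal is correct and follows essentially the same route as the paper: rescale via $u=(w-x)/\sqrt{t}$ to reduce to the standard $N(0,1)$ Stein equation, invoke the classical bound $\|g''\|\leq 2\|\psi'\|$ (which the paper simply cites as (2.13) of Chen, Goldstein and Shao (2010)), and translate back by the chain rule. The only cosmetic difference is the normalization — the paper sets $g(s)=\sqrt{t}\,f_\varphi(x+\sqrt{t}s)$ so that the test function is $h$ itself rather than $h/\sqrt{t}$ — and the bookkeeping comes out identically.
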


\begin{proof}
Define
\be{
g(s):=\sqrt{t} f_\varphi(x+\sqrt{t} s),\quad h(y):=\varphi(x+\sqrt{t} y).
}
We have
\be{
g(s)= e^{s^2/2} \int_{-\infty}^s e^{-y^2/2} \big\{h(y)-E [h(Z)] \big\} dy.
}
It is known that
$g(s)$ is a bounded solution to
\be{
g'(s)-sg(s)=h(s)-E [h(Z)]
}
and [see,  for example, (2.13) of \cite{ChGoSh10}]
\be{
||g''||\leq 2 ||h'||.
}
This implies \eq{25}.
\end{proof}

It is known that $V(t,x):=E\varphi(x+\sqrt{t}Z)$ is the solution to the heat equation
\ben{\label{24}
\partial_t V (t, x)= \frac{1}{2} \partial^2_{xx} V(t, x), \ V(0,x)=\varphi(x).
}
The next lemma relates the solution to the Stein equation to the solution to the heat equation.
\begin{lemma}\label{l3}
Let $V(\cdot, \cdot)$ be the solution to the heat equation \eq{24}.
Let $f_\varphi$ be the solution \eq{2} to Stein's equation \eq{1}.
We have
\ben{\label{26}
E [f_\varphi(x+\sqrt{t}Z)] = - \partial_x V(t,x).
}
\end{lemma}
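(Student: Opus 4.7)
The plan is to compute $E[f_\varphi(x+\sqrt{t}Z)]$ directly from the explicit formula (\ref{2}) and match it against $\partial_x V(t,x)$, which can be evaluated from $V(t,x) = E[\varphi(x+\sqrt{t}Z)]$ by differentiating under the expectation. Setting $w = x+\sqrt{t}Z$ in (\ref{2}) and writing $\psi(y) := \varphi(x+\sqrt{t}y) - V(t,x)$ (so that $E[\psi(Z)] = 0$), the factor $e^{(w-x)^2/(2t)}$ becomes $e^{Z^2/2}$, which conveniently cancels the standard Gaussian density when one takes expectation, leaving
\[
E[f_\varphi(x+\sqrt{t}Z)] = \frac{1}{\sqrt{2\pi t}} \int_{-\infty}^\infty \int_{-\infty}^{z} e^{-y^2/2}\, \psi(y)\, dy\, dz.
\]

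The key manipulation is a Fubini-type exchange of the order of integration. A naive swap fails because $\int_y^\infty dz$ diverges, so the double integral is not absolutely integrable in product order. The trick is to split at $z=0$: on $\{z<0\}$ swap directly, obtaining $\int_{-\infty}^0(-y)e^{-y^2/2}\psi(y)\,dy$; on $\{z\geq 0\}$ first replace $\int_{-\infty}^{z}e^{-y^2/2}\psi(y)\,dy$ by $-\int_{z}^{\infty}e^{-y^2/2}\psi(y)\,dy$ using $E[\psi(Z)]=0$, and then swap to obtain $-\int_0^\infty y\, e^{-y^2/2}\psi(y)\,dy$. Combining the two halves collapses to
\[
E[f_\varphi(x+\sqrt{t}Z)] = -\frac{1}{\sqrt{t}}\, E[Z\,\psi(Z)] = -\frac{1}{\sqrt{t}}\, E[Z\,\varphi(x+\sqrt{t}Z)],
\]
since the constant $V(t,x)$ contributes zero against $Z$.

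To finish, I would apply Gaussian integration by parts (Stein's identity for the standard normal) to get $E[Z\,\varphi(x+\sqrt{t}Z)] = \sqrt{t}\, E[\varphi'(x+\sqrt{t}Z)]$, and then note that differentiation under the expectation in $V(t,x) = E[\varphi(x+\sqrt{t}Z)]$ yields $\partial_x V(t,x) = E[\varphi'(x+\sqrt{t}Z)]$. Combining the two identities gives $E[f_\varphi(x+\sqrt{t}Z)] = -\partial_x V(t,x)$, which is (\ref{26}).

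The main obstacle is the non-absolute integrability that blocks a direct Fubini step; the $E[\psi(Z)]=0$ identity is exactly what is needed to rewrite one tail so that the swap becomes legitimate. Under mild regularity on $\varphi$ (e.g.\ Lipschitz, which is the working assumption in the paper), all the integrals involved converge and differentiation under the expectation in $V$ is easily justified, so no additional technicalities arise.
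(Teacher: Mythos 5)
Your argument is correct, and it reaches \eqref{26} by a genuinely different route from the paper. The paper also reduces the claim to showing $E[g(Z)]=-E[h'(Z)]$ for the rescaled solution $g(s)=\sqrt{t}\,f_\varphi(x+\sqrt{t}s)$, but it starts from the alternative representation (2.87) of Chen, Goldstein and Shao (2010), which expresses $g$ directly in terms of $h'$ weighted by $\Phi$ and $1-\Phi$; that representation is absolutely integrable against the Gaussian density, so the Fubini exchange there is immediate, and the proof closes with the explicit computation
$\Phi(u)\int_u^\infty(1-\Phi(s))\,ds+(1-\Phi(u))\int_{-\infty}^u\Phi(s)\,ds=\phi(u)$.
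You instead work from the raw formula \eqref{2} in terms of $\varphi$ itself, which forces you to confront a conditionally convergent double integral; your splitting at $z=0$ combined with $E[\psi(Z)]=0$ to flip one tail is exactly the right fix (on each half the integrand becomes $|y|e^{-y^2/2}|\psi(y)|$ after integrating out $z$, which is integrable since $\psi$ grows at most linearly, so Tonelli applies), and you then perform the Gaussian integration by parts at the end via Stein's identity $E[Z\varphi(x+\sqrt{t}Z)]=\sqrt{t}\,E[\varphi'(x+\sqrt{t}Z)]$. In effect the paper does the integration by parts first (packaged inside the cited formula) and pays for it by invoking an external identity, while you do it last and pay for it with the more delicate Fubini step; your version is more self-contained, the paper's is shorter given the reference. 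Both are complete under the standing assumption that $\varphi$ is Lipschitz.
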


\begin{proof}
Define again
\be{
g(s):=\sqrt{t} f_\varphi(x+\sqrt{t} s),\quad h(y):=\varphi(x+\sqrt{t} y).
}
We have
\be{
g(s)= e^{s^2/2} \int_{-\infty}^s e^{-y^2/2} \big\{h(y)-E [h(Z)] \big\} dy.
}
and $g(s)$ is a bounded solution to
\ben{\label{27}
g'(s)-sg(s)=h(s)-E [h(Z)].
}
As
\be{
E [f_\varphi(x+\sqrt{t}Z)]=\frac{1}{\sqrt{t}} E [g(Z)] \ \text{and}\  - \partial_x V(t,x)=-\frac{1}{\sqrt{t}} E [h'(Z)],
}
to prove \eq{26}, we only need to show
\ben{\label{28}
E[g(Z)]=-E[h'(Z)].
}
From (2.87) of \cite{ChGoSh10}, we have
\bes{
g(s)=&-\sqrt{2\pi} e^{s^2/2} (1-\Phi(s))\int_{-\infty}^s h'(u) \Phi(u)du\\
&-\sqrt{2\pi} e^{s^2/2} \Phi(s) \int_s^{\infty} h'(u)[1-\Phi(u)] du,
}
where $\Phi(\cdot)$ denotes the standard normal distribution function.
We have
\bes{
E[g(Z)]=& \int_{-\infty}^{\infty} (1-\Phi(s)) \int_{-\infty}^s (-h'(u)) \Phi(u) du ds\\
&+\int_{-\infty}^{\infty} \Phi(s) \int_{s}^\infty (-h'(u)) (1-\Phi(u)) du ds\\
=& \int_{-\infty}^\infty (-h'(u)) \Big\{\Phi(u) \int_u^\infty (1-\Phi(s))ds+(1-\Phi(u)) \int_{-\infty}^u \Phi(s)ds\Big\} du.
}
Let $\phi(u)$ be the standard normal density function. We have
\bes{
& \Phi(u)\int_u^\infty \int_s^{\infty} \phi(v) dv ds+(1-\Phi(u))\int_{-\infty}^ u \int_{\infty}^s \phi(v) dvds\\
=& \Phi(u)\int_u^\infty (v-u) \phi(v) dv +(1-\Phi(u)) \int_{-\infty}^u (u-v) \phi(v) dv\\
=&\Phi(u)\big[\phi(u)-u(1-\Phi(u))\big]+(1-\Phi(u)) \big[u\Phi(u)+\phi(u)\big]\\
=&\phi(u).
}
Therefore,
\be{
E[g(Z)]=\int_{-\infty}^\infty (-h'(u)) \phi(u)du=-E [h'(Z)].
}
This proves \eq{28} and hence, the lemma.
\end{proof}

\subsubsection{Proofs of Theorems \ref{t1} and \ref{t6}}

\begin{proof}[Proof of Theorem \ref{t1}]
The proof is by Lindeberg's swapping argument and Stein's method.
The approach was used by \cite{Ro17} for a martingale CLT.
See also \cite{So17}.

We note that in general $X_i$ is not independent of $\{X_j: j\ne i\}$. This fact prevents us from using some of the techniques in Stein's method.

Without loss of generality, we assume that $||\varphi'||=1.$
Denote
\be{
W_0=0, \ W_k=\xi_1+\dots +\xi_{k}, \ \xi_i=\frac{X_i-\mu_i}{\sigma_i\sqrt{n}},
}
and denote
\be{
W_{[k]}:=\{W_1,\dots, W_k\}.
}
For arbitrary random vectors $X$ and $Y$, denote
\be{
\E^X[\varphi(X, Y)]:=\big\{\E[\varphi(x,Y)] \big\}_{x=X}.
}
We will prove the following claim.

\begin{claim}\label{claim2}
Let $\phi_{\sigma}(\cdot)$ be the density function of $N(0,\sigma^2)$ and let $*$ denote the convolution of functions. 
For any k=1,\dots, n, we have
\be{
\Big| \E^{W_{[k-1]}} \Big[\varphi*\phi_{\sqrt{\frac{n-k}{n}}}(W_k)-\varphi*\phi_{\sqrt{\frac{n-k+1}{n}}}(W_{k-1})\Big]    \Big| \leq \frac{C_1}{(n-k+1)\sqrt{n}},
}
where $C_1$ is as in the statement of Theorem \ref{t1}.
\end{claim}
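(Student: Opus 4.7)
The strategy follows the Lindeberg--Stein template of \cite{Ro17}: rewrite the one-step increment via the Stein equation, Taylor expand, and check that the leading terms annihilate under $\E^{W_{[k-1]}}$ thanks to the tuned choices \eq{13}--\eq{14} of $\mu_k$ and $\sigma_k$, while the remainder is controlled by Lemma \ref{l2}.

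Concretely, set $V(t,x) := \varphi*\phi_{\sqrt{t}}(x)$ and let $f := f_\varphi$ be the Stein solution \eq{2} with parameters $(x,t) = (W_{k-1}, t_{k-1})$. Evaluating \eq{1} at $w = W_k + \sqrt{t_k}\,Z$ for an independent standard Gaussian $Z$, taking $E^Z$, and applying Gaussian integration by parts to the $\sqrt{t_k}\,Z\,f$-term, one obtains the exact identity
\be{V(t_k,W_k) - V(t_{k-1},W_{k-1}) = \tfrac{1}{n}\,F'(W_k) - \xi_k\,F(W_k),}
where $F(w) := E^Z[f(w + \sqrt{t_k}\,Z)]$. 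A Taylor expansion of $F$ and $F'$ around $W_{k-1}$ then writes the right-hand side as
\be{-\xi_k\,F(W_{k-1}) + \bigl(\tfrac{1}{n} - \xi_k^2\bigr)\,F'(W_{k-1}) + R,}
with $|R| \leq \bigl(\tfrac{|\xi_k|^3}{2} + \tfrac{|\xi_k|}{n}\bigr)\,\|F''\|$.

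For the ``main'' (non-$R$) term, Lemma \ref{l3} identifies $F(W_{k-1}) = -\partial_x V_{k-1} + \epsilon_1$, and integrating the Stein equation under $E^Z$ at $w = W_{k-1} + \sqrt{t_k}\,Z$ yields $F'(W_{k-1}) = n[V(t_k, W_{k-1}) - V(t_{k-1}, W_{k-1})] = -\tfrac{1}{2}\partial_{xx}^2 V_{k-1} + \epsilon_2$, with both errors of order $\|\varphi'\|/(n t_{k-1})$ via Lemma \ref{l2}. Since $X_k$ is independent of $W_{[k-1]}$ under $\E$, conditioning reduces $\E^{W_{[k-1]}}[\text{main}]$ to a supremum over $(\mu,\sigma^2) \in \mathcal{M}_2$ of a quadratic in $X_k - \mu_k$. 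Substituting the identifications above, this supremum equals, up to errors of the target order,
\be{\tfrac{1}{2}\sup_{(\mu,\sigma^2) \in \mathcal{M}_2}\Bigl[f_{k-1,\sigma_k}(\mu,\sigma^2) + \tfrac{(\mu - \mu_k)\,\partial_x V_{k-1}}{\sigma_k\,\sqrt{n}}\Bigr].}
By \eq{13} the added linear term is $\leq 0$ on $\mathcal{M}_2$, and by \eq{14} $\sup_{\mathcal{M}_2} f_{k-1,\sigma_k} = 0$; hence the upper sublinear expectation of the main term is $\leq 0$, and the lower one is pinned down by evaluating at the $f$-maximizer.

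For the remainder, $\|F''\| \leq \|f''\| \leq 2\|\varphi'\|/t_{k-1}$ by Lemma \ref{l2}; combined with $\E[|\xi_k|^3] \leq (\overline{\gamma} + (\overline{\mu} - \underline{\mu})^3)/(\sigma_0^3\,n^{3/2})$ (using $\sigma_k \geq \sigma_0$) and Cauchy--Schwarz for $\E|\xi_k|/n$, this yields $|\E^{W_{[k-1]}}[R]| \leq C\|\varphi'\|/(t_{k-1}\,n^{3/2}) = C/((n-k+1)\sqrt{n})$ with $C$ of the order of $C_1$. The main obstacle is the third-paragraph bookkeeping: propagating the $\epsilon_1, \epsilon_2$ errors through the supremum over $\mathcal{M}_2$, and pinning down both one-sided sublinear expectations of the main term as $O(C_1/((n-k+1)\sqrt{n}))$ rather than merely nonpositive on one side. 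The uniform lower bound $\sigma_k \geq \sigma_0 > 0$ from Assumption B and the remark after Theorem \ref{t1} is essential to keep $1/\sigma_k^3$ under control in the third-moment estimate.
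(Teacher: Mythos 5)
Your overall architecture is the paper's: the same telescoping through the heat semigroup, the same exact identity $V(t_k,W_k)-V(t_{k-1},W_{k-1})=\tfrac1nF'(W_k)-\xi_kF(W_k)$ (the paper writes it with explicit auxiliary Gaussians $T_{n-k}$ in \eq{32}--\eq{104} instead of your $F(w)=E^Z[f(w+\sqrt{t_k}Z)]$), the same Taylor expansion, and the same remainder estimates via Lemma \ref{l2} together with the moment bounds coming from $\sigma_k\geq\sigma_0$. Up to the end of your second paragraph the argument matches the paper's.

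The gap is exactly where you flag it, and it is not mere bookkeeping. Because you smooth with the variance-$t_k$ Gaussian, your main term reduces to $\tfrac12\sup_{\mathcal{M}_2}\bigl[f_{k-1,\sigma_k}(\mu,\sigma^2)+\ell(\mu)\bigr]$ with the extra linear term $\ell(\mu)=(\mu-\mu_k)\partial_xV_{k-1}/(\sigma_k\sqrt n)$, and the definitions \eq{13}--\eq{14} only give you ``$\leq 0$''. The matching lower bound is not of the target order: evaluating at a near-maximizer $(\mu^*,\sigma^{*2})$ of $f_{k-1,\sigma_k}$ gives $\sup[f_{k-1,\sigma_k}+\ell]\geq\ell(\mu^*)=-q(\mu^*,\sigma^{*2})$ where $q$ is the quadratic part, and since $|q|\leq C|\partial^2_{xx}V_{k-1}|/n\leq C/\sqrt{n(n-k+1)}$, the per-step defect sums over $k$ to a constant rather than to $O(\log n/\sqrt n)$; the cruder bound $|\ell|\leq(\overline{\mu}-\underline{\mu})/(\sigma_0\sqrt n)$ is even worse. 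The missing idea is the paper's random recentering: it expands around $W_{k-1}+T_{n-k+1}$ (variance $t_{k-1}$, the anchor of the Stein solution) by writing $T_{n-k+1}=T_{n-k}+\eta_k$, and the resulting cross term $\xi_k\eta_kf'$ is killed because $\eta_k$ is mean-zero and independent (this is $R_3$ in \eq{34}). After that shift the main term $A$ in \eq{106} has coefficients $\xi_k$ and $\xi_k^2-\tfrac1n$ on the $t_{k-1}$-smoothed $-f$ and $-f'$, so its sublinear expectation is \emph{identically} $\sup_{(\mu,\sigma^2)\in\mathcal{M}_2}f_{k-1,\sigma_k}(\mu,\sigma^2)=0$ by the very construction of $\mu_k$ and $\sigma_k$, and the two-sided estimate follows from \eq{101} with the main part exactly zero --- no one-sided argument is ever needed. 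A side effect of your route is that it surfaces a normalization question: a direct computation (e.g.\ with $\varphi(w)=w^2$) gives $E[f'(x+\sqrt tZ)]=-\tfrac12\partial^2_{xx}V(t,x)$, with a factor $\tfrac12$ that the paper's application of Lemma \ref{l3} to the $f'$-term does not display; this affects only the normalization of the second term in the definition \eq{17} of $f_{i-1,b}$ (hence of $\sigma_i$), not the structure of the exact cancellation, but it is precisely what prevents your version of the main term from vanishing under the stated \eq{14}.
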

Using telescoping sum and the independence assumption and applying Claim \ref{claim2} recursively from $k=n$ to $k=1$ as in the argument below Claim \ref{claim1}, we have
\bes{
&\Big| \E[\varphi(W_n)]-E[\varphi(Z)] \Big|\\
=&\Big| \E\Big\{ \sum_{k=1}^n [\varphi*\phi_{\sqrt{\frac{n-k}{n}}}(W_k)-\varphi*\phi_{\sqrt{\frac{n-k+1}{n}}}(W_{k-1})]   \Big\}\Big|\\
\leq & \sum_{k=1}^n  \frac{C_1}{(n-k+1)\sqrt{n}}\\
\leq & \frac{C_1(\log n+1)}{\sqrt{n}}.
}
Therefore, we obtain Theorem \ref{t1}, subject Claim \ref{claim2}.

To prove Claim \ref{claim2},
let $\eta_1,\dots, \eta_n$ be an i.i.d.\  sequence of random variables distributed as $N(0, \frac{1}{n})$ and be independent of $\{X_1, \dots, X_n\}$,
and let
\be{
T_k=\eta_n+\dots +\eta_{n-k+1}\sim N(0,\frac{k}{n}).
}
We have
\besn{\label{4}
&\E^{W_{[k-1]}} \Big[\varphi*\phi_{\sqrt{\frac{n-k}{n}}}(W_k)-\varphi*\phi_{\sqrt{\frac{n-k+1}{n}}}(W_{k-1})\Big]\\
=& \E^{W_{k-1}} \big\{ \varphi(W_k+T_{n-k}) -E[\varphi(Z_{W_{k-1}, \frac{n-k+1}{n}})] \big\},
}
where as in Section 5.2.1, $Z_{x,t}\sim N(x,t)$.
Given $W_{k-1}$, let $f$ be the solution to (cf. \eq{1})
\ben{\label{6}
\frac{n-k+1}{n} f'(w)-(w-W_{k-1} )f(w)=\varphi(w)-E\big[\varphi(Z_{W_{k-1}, \frac{n-k+1}{n}})\big].
}
Based on Lemma \ref{l2} and $||\varphi'||= 1$,
\ben{\label{7}
||f''||\leq \frac{2n}{n-k+1}.
}
From \eq{6}, we can rewrite \eq{4} as
\besn{\label{32}
&\E^{W_{[k-1]}} \Big[\varphi*\phi_{\sqrt{\frac{n-k}{n}}}(W_k)-\varphi*\phi_{\sqrt{\frac{n-k+1}{n}}}(W_{k-1})\Big]\\
=&\E^{W_{k-1}}\Big[\frac{n-k+1}{n} f'(W_k+T_{n-k} )-(\xi_{k} +T_{n-k} )f(W_k+T_{n-k})\Big]\\
=&\E^{W_{k-1}}\Big[\frac{1}{n} f'(W_k+T_{n-k} )-\xi_{k}  f(W_k+T_{n-k})\\
&\qquad \quad +\frac{n-k}{n} f'(W_k+T_{n-k} )-T_{n-k} f(W_k+T_{n-k})\Big].
}
Recall that $T_{n-k}\sim N(0, \frac{n-k}{n})$ and is independent of $\{X_1,\dots, X_n\}$.
Using \eq{21} with $Y=T_{n-k}$, $x=0, t=(n-k)/n$, we have
\be{
\E^{W_{k-1}}\Big[\frac{n-k}{n} f'(W_k+T_{n-k} )-T_{n-k} f(W_k+T_{n-k})\Big]=0,
}
and
\be{
-\E^{W_{k-1}}\Big\{-\big[\frac{n-k}{n} f'(W_k+T_{n-k} )-T_{n-k} f(W_k+T_{n-k})\big]\Big\}=0.
}
Therefore, from \eq{32} and \eq{33} where we regard $Y$ as the third and fourth terms on the right-hand side of \eq{32}, we obtain
\besn{\label{104}
&\E^{W_{[k-1]}} \Big[\varphi*\phi_{\sqrt{\frac{n-k}{n}}}(W_k)-\varphi*\phi_{\sqrt{\frac{n-k+1}{n}}}(W_{k-1})\Big]\\
=&\E^{W_{k-1}}\Big[\frac{1}{n} f'(W_k+T_{n-k} )-\xi_{k}  f(W_k+T_{n-k})\Big].
}
Rewrite
\besn{\label{34}
&\E^{W_{k-1}}\Big[\frac{1}{n} f'(W_k+T_{n-k} )-\xi_{k}  f(W_k+T_{n-k})\Big]\\
=&\E^{W_{k-1}}\Big\{\frac{1}{n} f'(W_k+T_{n-k} )-\xi_{k}  f(W_{k-1}+T_{n-k+1})\\
&\qquad \quad -\xi_{k}[f(W_{k-1}+\xi_{k}+T_{n-k})-  f(W_{k-1}+T_{n-k+1})]\Big\}\\
=&\E^{W_{k-1}}\Big[\frac{1}{n} f'(W_{k-1}+T_{n-k+1} )+R_1-\xi_{k}  f(W_{k-1}+T_{n-k+1})\\
&\qquad \quad -\xi_{k}^2 f'(W_{k-1}+T_{n-k+1}) + \xi_{k} \eta_{k} f'(W_{k-1}+T_{n-k+1})+R_2   \Big]\\
=&\E^{W_{k-1}}\Big[\frac{1}{n} f'(W_{k-1}+T_{n-k+1} )+R_1-\xi_{k}  f(W_{k-1}+T_{n-k+1})\\
&\qquad \quad -\xi_{k}^2 f'(W_{k-1}+T_{n-k+1}) + \xi_{k} \eta_{k} f'(W_{k-1}+T_{n-k})+R_2+R_3   \Big],
}
where
\be{
R_1=\frac{1}{n} f'(W_k+T_{n-k} )-\frac{1}{n} f'(W_{k-1}+T_{n-k+1} ),
} 
\be{
R_2=-\xi_k\Big[f(W_{k-1}+\xi_{k}+T_{n-k})-  f(W_{k-1}+T_{n-k+1})-(\xi_k-\eta_k)f'(W_{k-1}+T_{n-k+1})\Big]
} 
and
\be{
R_3=\xi_k \eta_k\Big[f'(W_{k-1}+T_{n-k+1})- f'(W_{k-1}+T_{n-k})\Big].
} 
Based on \eq{7} and the fact that $X_k$ is independent of $W_{k-1}$ and $\eta_k\sim N(0,\frac{1}{n})$ is independent of $\{X_1,\dots, X_n\}$, we have
\be{
\E^{W_{k-1}}[|R_1|]\leq \E^{W_{k-1}} \Big[\frac{1}{n}|\xi_{k}-\eta_{k}| ||f''||\Big]\leq \frac{2}{n-k+1} \Big(\frac{1}{\sqrt{n}}+\E[|\xi_k|]\Big),
}
\be{
\E^{W_{k-1}}[|R_2|]\leq \E^{W_{k-1}} \Big[|\xi_{k}| \frac{(\xi_{k}-\eta_{k})^2}{2} ||f''||\Big]\leq
\frac{2n}{n-k+1} \Big(\E[|\xi_k|^3]/2+\frac{1}{n}\E[|\xi_k|]/2\Big),
}
\be{
\E^{W_{k-1}}[|R_3|]\leq \E^{W_{k-1}} \Big[|\xi_{k}| \eta_{k}^2\Big] ||f''||\leq \frac{2}{n-k+1} \E[|\xi_k|].
}
From \eq{104}, \eq{34} and the estimates above, we have
\besn{\label{105}
&\Big| \E^{W_{[k-1]}} \Big[\varphi*\phi_{\sqrt{\frac{n-k}{n}}}(W_k)-\varphi*\phi_{\sqrt{\frac{n-k+1}{n}}}(W_{k-1})\Big]-A\Big| \\ 
\leq & \frac{2}{(n-k+1)\sqrt{n}}\Big( 1 +\frac{5 n^{1/2}}{2} \E[|\xi_k|] +\frac{n^{3/2}}{2} \E[|\xi_k|^3]     \Big),
}
where 
\besn{\label{106}
A:=&\E^{W_{k-1}} \Big\{ \xi_{k} [-f(W_{k-1}+T_{n-k+1})] + (\xi_{k}^2-\frac{1}{n}) [-f'(W_{k-1}+T_{n-k+1})]\\
&\qquad\quad  +\xi_{k} \eta_{k} f'(W_{k-1}+T_{n-k})\Big\}.
}
Note that
\be{
n^{1/2}\E[|\xi_k|] =\sup_{\mu_i, \sigma_i\atop i=1,\dots, n} \E\Big[|\frac{X_i-\mu_i}{\sigma_i}|\Big]\leq \frac{\overline{\sigma}+(\overline{\mu}-\underline{\mu})}{\sigma_0},
}
and
\be{
n^{3/2}\E[|\xi_k|^3]=\sup_{\mu_i, \sigma_i\atop i=1,\dots, n} \E\Big[|\frac{X_i-\mu_i}{\sigma_i}|^3\Big]\leq \frac{4[\overline{\gamma}+(\overline{\mu}-\underline{\mu})^3]}{\sigma_0^3}.
}
Therefore, \eq{105} is further bounded by 
\be{
\frac{C_1}{(n-k+1)\sqrt{n}},
}
where $C_1$ is as in the statement of Theorem \ref{t1}.

We are left to show that $A$ in \eq{106} equals 0.
Since $\eta_k$ has mean 0 and is independent of $\{X_1,\dots, X_n\}$ and $T_{n-k}$, we have
\be{
\E^{W_{k-1}} \Big[\xi_{k} \eta_{k} f'(W_{k-1}+T_{n-k})\Big]=0\ \text{and} \ \E^{W_{k-1}}\Big[-\xi_k \eta_k f'(W_{k-1}+T_{n-k})\Big]=0.
}
By the property \eq{33} of sublinear expectation, we have
\be{
A =\E^{W_{k-1}} \Big\{ \xi_{k} [-f(W_{k-1}+T_{n-k+1})] + (\xi_{k}^2-\frac{1}{n}) [-f'(W_{k-1}+T_{n-k+1})]\Big\}.
}
Using Lemma \ref{l3} and $t_i=\frac{n-i}{n}$ in the statement of the theorem, we have
\be{
A=\E^{W_{k-1}} \Big[ \xi_{k} \partial_x V(t_{k-1}, W_{k-1}) + (\xi_{k}^2-\frac{1}{n}) \partial^2_{xx} V(t_{k-1}, W_{k-1})\Big].
}
Moreover, by the definition of $\xi_k$ and $V_{i}$ below \eq{17}, we have
\be{
A=\E^{W_{k-1}} \Big\{   \frac{X_k-\mu_k}{\sigma_k} \frac{\partial_x V_{k-1}}{\sqrt{n}} +\big[\frac{(X_k-\mu_k)^2}{\sigma_k^2}-1\big] \frac{\partial_{xx}^2 V_{k-1}}{n}  \Big\},
}
and by the definition of $\E$,
\be{
A=\sup_{(\mu,\sigma^2)\in \mathcal{M}_2} \Big\{   \frac{\mu-\mu_k}{\sigma_k} \frac{\partial_x V_{k-1}}{\sqrt{n}} +\big[\frac{\sigma^2+(\mu-\mu_k)^2}{\sigma_k^2}-1\big] \frac{\partial_{xx}^2 V_{k-1}}{n}   \Big\}.
}
Finally, by the choice of $\mu_k$ and $\sigma_k$ in \eq{13} and \eq{14}, we have $A=0$.
Note that part of the reason for the particular expansion of \eq{34} is to find connections to $V$.
This, together with \eq{105}, proves Claim \ref{claim2}.

\end{proof}

\begin{proof}[Proof of Theorem \ref{t6}]
The proof is similar to that of Theorem \ref{t1}.
We use a slightly different expansion (cf. \eq{111}) and make use of the convexity (concavity) of $\varphi$ (cf. \eq{113} and \eq{114}).

We only prove the case where $\varphi$ is convex.
The concave case follows from a similar argument.
Without loss of generality, we assume that $\mu=0$ and $||\varphi'|| = 1$.
Denote
\be{
W_0=0,\ W_k=\xi_1+\dots +\xi_{k},\quad \xi_i=\frac{X_i}{\overline{B}_n},
}
and denote
\be{
W_{[k]}:=\{W_1,\dots, W_k\}.
}
Define
\be{
\Sigma_k^2=\sum_{i=n-k+1}^n \overline{\sigma}_i^2/\overline{B}_n^2
}
We will prove the following claim.
\begin{claim}\label{claim3}
Let $\phi_{\sigma}(\cdot)$ be the density function of $N(0,\sigma^2)$ and let $*$ denote the convolution of functions. 
For any k=1,\dots, n, we have
\be{
\Big| \E^{W_{[k-1]}} \Big[\varphi*\phi_{\Sigma_{n-k}}(W_k)-\varphi*\phi_{\Sigma_{n-k+1}}(W_{k-1})\Big]    \Big| \leq \frac{1}{\Sigma_{n-k+1}^2}\Big(\frac{2\overline{\sigma}_k^3}{\overline{B}_n^3}+ \E[|\xi_k|^3]\Big).
}
\end{claim}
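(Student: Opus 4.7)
The plan is to follow the proof of Claim~\ref{claim2} closely, with the key simplification coming from the convexity of $\varphi$. First, I would introduce independent Gaussians $\eta_i \sim N(0, \overline{\sigma}_i^2/\overline{B}_n^2)$ for $1 \leq i \leq n$, independent of the $X_i$'s, and set $T_{n-k} := \eta_n + \cdots + \eta_{k+1} \sim N(0, \Sigma_{n-k}^2)$, so that $T_{n-k+1} = T_{n-k} + \eta_k \sim N(0, \Sigma_{n-k+1}^2)$. Then $\varphi * \phi_{\Sigma_{n-k}}(W_k) = E[\varphi(W_k + T_{n-k}) \mid X_1, \ldots, X_n]$ and $\varphi * \phi_{\Sigma_{n-k+1}}(W_{k-1}) = E[\varphi(W_{k-1} + T_{n-k+1})]$. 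Letting $f$ be the Stein solution for $N(W_{k-1}, \Sigma_{n-k+1}^2)$, which satisfies $\|f''\| \leq 2/\Sigma_{n-k+1}^2$ by Lemma~\ref{l2}, and substituting $w = W_k + T_{n-k}$ into the Stein equation, the left-hand side of Claim~\ref{claim3} becomes
\bes{
\E^{W_{[k-1]}}\bigl[\Sigma_{n-k+1}^2 f'(W_k+T_{n-k}) - (\xi_k + T_{n-k}) f(W_k+T_{n-k})\bigr].
}

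Next, I would split $\Sigma_{n-k+1}^2 = \Sigma_{n-k}^2 + \overline{\sigma}_k^2/\overline{B}_n^2$. The ``$T_{n-k}$ piece'' $\Sigma_{n-k}^2 f'(W_k+T_{n-k}) - T_{n-k} f(W_k+T_{n-k})$ has classical mean zero conditional on the $X_i$'s, by the Gaussian Stein identity, so its upper and lower sublinear expectations both vanish and by~\eqref{33} it may be dropped. The remainder $(**) := \tfrac{\overline{\sigma}_k^2}{\overline{B}_n^2} f'(W_k+T_{n-k}) - \xi_k f(W_k+T_{n-k})$ I would Taylor-expand, with $f$ and $f'$ expanded around $W_{k-1}+T_{n-k}$, producing
\bes{
(**) = \underbrace{\bigl(\tfrac{\overline{\sigma}_k^2}{\overline{B}_n^2} - \xi_k^2\bigr) f'(W_{k-1}+T_{n-k}) - \xi_k f(W_{k-1}+T_{n-k})}_{=:\, \mathsf{M}} + R,
}
with $|R| \leq \|f''\|\bigl(\tfrac{\overline{\sigma}_k^2}{\overline{B}_n^2}|\xi_k| + |\xi_k|^3/2\bigr)$ absorbing the Taylor remainders.

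The crucial step is to show $\E^{W_{[k-1]}}[\mathsf{M}] = 0$. Peng independence of $X_k$ from the classical Gaussian $T_{n-k}$ together with $E_\theta[\xi_k] = 0$ reduce it to $\sup_\theta \bigl(\overline{\sigma}_k^2/\overline{B}_n^2 - E_\theta[\xi_k^2]\bigr) g(W_{k-1})$, where $g(W_{k-1}) := E_{T_{n-k}}[f'(W_{k-1}+T_{n-k})]$. Substituting the Stein equation into this Gaussian integral and applying the Stein identity for $T_{n-k}$ yields the closed form
\bes{
g(W_{k-1}) = \frac{\overline{B}_n^2}{\overline{\sigma}_k^2}\bigl[V(\Sigma_{n-k}^2, W_{k-1}) - V(\Sigma_{n-k+1}^2, W_{k-1})\bigr],
}
where $V(t,x) := E[\varphi(x+\sqrt{t}\, Z)]$ solves the heat equation. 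For convex $\varphi$, $V(t, \cdot)$ is convex, so $\partial_t V = \tfrac{1}{2}\partial^2_{xx}V \geq 0$ and $V$ is non-decreasing in $t$; hence $g(W_{k-1}) \leq 0$. Since $\overline{\sigma}_k^2/\overline{B}_n^2 - E_\theta[\xi_k^2] \geq 0$ with equality at the $\theta$ attaining the upper variance, the supremum equals $0$, giving $\E[\mathsf{M}] = 0$.

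Finally, by sub-additivity $\E[\mathsf{M}] = 0$ implies $-\E[|R|] \leq \E[\mathsf{M} + R] \leq \E[|R|]$, so $|\E^{W_{[k-1]}}[(**)]| \leq \E[|R|]$. Using $\|f''\| \leq 2/\Sigma_{n-k+1}^2$ and $\E[|\xi_k|] \leq \overline{\sigma}_k/\overline{B}_n$ (Cauchy--Schwarz), this gives $\E[|R|] \leq \tfrac{1}{\Sigma_{n-k+1}^2}\bigl(\tfrac{2\overline{\sigma}_k^3}{\overline{B}_n^3} + \E[|\xi_k|^3]\bigr)$, matching Claim~\ref{claim3}. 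The main obstacle is verifying $\E[\mathsf{M}] = 0$, where convexity is essential: without it the sign of $g(W_{k-1})$ would be indefinite and the sign-matching argument forcing the supremum to $0$ would break down.
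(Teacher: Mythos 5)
Your argument is correct and follows the paper's proof essentially step for step: the same Gaussian smoothing and telescoping reduction, the same Stein equation with $\|f''\|\le 2/\Sigma_{n-k+1}^2$, the same Taylor decomposition into a main term plus remainders with identical bounds, and the same convexity-plus-sign argument forcing the sublinear expectation of the main term to vanish. The only (harmless) divergence is in identifying $g(W_{k-1})=E_{T_{n-k}}[f'(W_{k-1}+T_{n-k})]$: you derive the exact closed form $\frac{\overline{B}_n^2}{\overline{\sigma}_k^2}\bigl[V(\Sigma_{n-k}^2,W_{k-1})-V(\Sigma_{n-k+1}^2,W_{k-1})\bigr]$ directly from the Gaussian Stein identity, whereas the paper invokes Lemma~\ref{l3} to write it as $-\partial^2_{xx}V(\Sigma_{n-k}^2,W_{k-1})$; both expressions are nonpositive for convex $\varphi$, which is all the argument requires.
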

Using telescoping sum and the independence assumption and applying Claim \ref{claim3} recursively from $k=n$ to $k=1$ as in the argument below Claim \ref{claim2}, we obtain the theorem.

To prove Claim \ref{claim3}, let $\eta_1,\dots, \eta_n$ be an independent sequence of random variables distributed as $\eta_i\sim N(0, \frac{\overline{\sigma}_i^2}{\overline{B}_n^2})$ and be independent of $\{X_1, \dots, X_n\}$,
and let
\be{
T_k=\eta_n+\dots +\eta_{n-k+1}\sim N(0,\Sigma_k^2).
}
As in \eq{4}, we have
\bes{
&\E^{W_{[k-1]}} \Big[\varphi*\phi_{\Sigma_{n-k}}(W_k)-\varphi*\phi_{\Sigma_{n-k+1}}(W_{k-1})\Big]\\
=& \E^{W_{k-1}} \big\{ \varphi(W_k+T_{n-k}) -E[\varphi(Z_{W_{k-1}, \Sigma_{n-k+1}^2})] \big\}.
}
Given $W_{k-1}$, let $f$ be the solution to
\ben{\label{110}
\Sigma_{n-k+1}^2 f'(w)-(w-W_{k-1} )f(w)=\varphi(w)-E\Big[\varphi(Z_{W_{k-1}, \Sigma_{n-k+1}^2})\Big].
}
Based on lemma \ref{l2} and $||\varphi'||= 1$, we have
\ben{\label{7p}
||f''||\leq \frac{2}{\Sigma_{n-k+1}^2}.
}
By a similar argument leading to \eq{104}, we have
\bes{
&\E^{W_{[k-1]}} \Big[\varphi*\phi_{\Sigma_{n-k}}(W_k)-\varphi*\phi_{\Sigma_{n-k+1}}(W_{k-1})\Big]\\
=&\E^{W_{k-1}}\Big[\frac{\overline{\sigma}_k^2}{\overline{B}_n^2} f'(W_k+T_{n-k} )-\xi_{k}  f(W_k+T_{n-k})\Big].
}

The appropriate change to \eq{34} is as follows:
\besn{\label{111}
&\E^{W_{k-1}}\Big[\frac{\overline{\sigma}_k^2}{\overline{B}_n^2} f'(W_k+T_{n-k} )-\xi_{k}  f(W_k+T_{n-k})\Big]\\
=&\E^{W_{k-1}}\Big\{\frac{\overline{\sigma}_k^2}{\overline{B}_n^2} f'(W_{k-1}+T_{n-k} )+ R_1 \\
&-\xi_{k}  f(W_{k-1}+T_{n-k})+\xi_k^2 (-f'(W_{k-1}+T_{n-k}))+R_2\Big\},
}
where 
\be{
R_1=\frac{\overline{\sigma}_k^2}{\overline{B}_n^2} f'(W_k+T_{n-k} )-\frac{\overline{\sigma}_k^2}{\overline{B}_n^2} f'(W_{k-1}+T_{n-k} )
}
and
\be{
R_2=-\xi_k \Big[ f(W_k+T_{n-k})-f(W_{k-1}+T_{n-k})-\xi_k f'(W_{k-1}+T_{n-k})\Big].
}
Based on \eq{7p} and the fact that $X_k$ is independent of $W_{k-1}$ and $\eta_k$ is independent of $\{X_1,\dots, X_n\}$,
\be{
\E^{W_{k-1}}[|R_1|]\leq \E^{W_{k-1}} \Big[\frac{\overline{\sigma}_k^2}{\overline{B}_n^2}|\xi_{k}| ||f''||\Big]\leq \frac{2}{\Sigma_{n-k+1}^2} \cdot \frac{\overline{\sigma}_k^2}{\overline{B}_n^2}\E[|\xi_k|]
\leq \frac{2}{\Sigma_{n-k+1}^2}\cdot \frac{\overline{\sigma}_k^3}{\overline{B}_n^3},
}
\be{
\E^{W_{k-1}}[|R_2|]\leq \E^{W_{k-1}} \Big[\frac{|\xi_{k}|^3}{2}\Big] ||f''||\leq
\frac{1}{\Sigma_{n-k+1}^2} \E[|\xi_k|^3].
}
Therefore, we have
\besn{\label{108}
&\bigg| \E^{W_{k-1}}\Big[\frac{\overline{\sigma}_k^2}{\overline{B}_n^2} f'(W_k+T_{n-k} )-\xi_{k}  f(W_k+T_{n-k})\Big] -B\bigg|\\
\leq & \frac{1}{\Sigma_{n-k+1}^2}\Big(\frac{2\overline{\sigma}_k^3}{\overline{B}_n^3}+ \E[|\xi_k|^3]\Big),
}
where
\be{
B:=\E^{W_{k-1}}\Big\{\xi_{k} [ -f(W_{k-1}+T_{n-k})]+(\xi_k^2-\frac{\overline{\sigma}_k^2}{\overline{B}_n^2}) [-f'(W_{k-1}+T_{n-k})]\Big\}.
}
By the definition of $\xi_k$, we have
\be{
B=\E^{W_{k-1}}\Big\{\frac{X_k}{\overline{B}_n} [ -f(W_{k-1}+T_{n-k})]+\frac{X_k^2-\overline{\sigma}_k^2}{\overline{B}_n^2} [-f'(W_{k-1}+T_{n-k})]\Big\}.
}
Since we have assumed that $\E(X_k)=\E(-X_k)=0$, we have, using the property \eq{33} of the sublinear expectation and also the fact that $T_{n-k}$ is independent of $\{X_1,\dots, X_n\}$,
\be{
B=\E^{W_{k-1}}\Big\{\frac{X_k^2-\overline{\sigma}_k^2}{\overline{B}_n^2} [-f'(W_{k-1}+T_{n-k})]\Big\}.
}
From Lemma \ref{l3} and the fact that $T_{n-k}$ is independent of $\{X_1,\dots, X_n\}$, we have
\ben{\label{113}
B=\E^{W_{k-1}}\Big\{\frac{X_k^2-\overline{\sigma}_k^2}{\overline{B}_n^2} \partial^2_{xx}V(\Sigma_{n-k}^2, W_{k-1}) \Big\}.
}
Since we have assumed that $\varphi$ is convex, the solution to the PDE \eq{102} (cf. \eq{107}) is also convex in the argument $x$, that is, $\partial^2_{xx} V\geq 0$. Therefore, by the definition of sublinear expectation, 
\ben{\label{114}
B=0,
}
and hence by \eq{108},
\bes{
&\bigg| \E^{W_{k-1}}\Big[\frac{\overline{\sigma}_k^2}{\overline{B}_n^2} f'(W_k+T_{n-k} )-\xi_{k}  f(W_k+T_{n-k})\Big] \bigg|\\
\leq & \frac{1}{\Sigma_{n-k+1}^2}\Big(\frac{2\overline{\sigma}_k^3}{\overline{B}_n^3}+ \E[|\xi_k|^3]\Big),
}
This proves Claim \ref{claim3}.

\end{proof}

\subsection{Proofs in Section 4}

\begin {proof} [Proof of Theorem \ref{CLT}] Define $\xi=(\xi_1,\cdots, \xi_n):\mathbb{R}^n\rightarrow\mathbb{R}^n$ by $\xi_i(x)=x_i$, $i=1,\cdots, n$. Denote as $\mathcal{H}$ the collection of continuous real-valued functions $h$ on $\mathbb{R}^n$ with $|h(x)|\le C(1+|x|^3)$ for some constant $C>0$. For a function $h\in\mathcal{H}$, set
\[\mathbb{E}[h(\xi)]:=\sup\limits_{\sigma\in\Sigma^{\mathbb{N}}_G}E[h(X^{\sigma}_{1,n},\cdots, X^{\sigma}_{n,n})].\]
Then, $\mathbb{E}[\xi_i]=\mathbb{E}[-\xi_i]=0,$  $ \mathbb{E}[\xi_i^2]=\overline{\sigma}^2$ and $-\mathbb{E}[-\xi_i^2]=\underline{\sigma}^2$, $i=1,2,\cdots,n.$  Moreover, for a function $\varphi\in lip(\mathbb{R})$, we have  \[\mathbb{E}[\varphi(\xi_i)]=\sup\limits_{\lambda\in[\underline{\sigma},\overline{\sigma}]}E[\varphi(\lambda X_i)]=:\mathcal{N}[\varphi],\]
i.e., $\xi_1, \cdots, \xi_n$ are identically distributed under $\mathbb{E}$.

Set $W_{i,n}=\frac{\xi_1+\cdots+\xi_i}{\sqrt{n}}$. We next prove that, for any function $\varphi\in lip(\mathbb{R})$,
\begin{eqnarray}\label {se2}
\mathbb{E}[\varphi(W_{i+1,n})]=\mathbb{E}[\mathbb{E}[\varphi(s+\frac{\xi_{i+1}}{\sqrt{n}})]\big|_{s=W_{i,n}}].
\end {eqnarray}
On the one hand, we have, for any $\sigma\in \Sigma^{\mathbb{N}}_G$,
\[E[\varphi(W^{\sigma}_{i+1,n})]=E[E[\varphi(s+\sigma_{i+1}(s)\frac{X_{i+1}}{\sqrt{n}})]\big|_{s=W^{\sigma}_{i,n}}]\le E[\mathbb{E}[\varphi(s+\frac{\xi_{i+1}}{\sqrt{n}})]\big|_{s=W^{\sigma}_{i,n}}].\]
Therefore we obtain \[\mathbb{E}[\varphi(W_{i+1,n})]\le\mathbb{E}[\mathbb{E}[\varphi(s+\frac{\xi_{i+1}}{\sqrt{n}})]\big|_{s=W_{i,n}}].\]
On the other hand, for each $s\in\mathbb{R}$, we choose $\lambda^{\varphi,n}(s)\in[\underline{\sigma},\overline{\sigma}]$ such that
\[E[\varphi(s+\lambda^{\varphi,n}(s)\frac{X_{1}}{\sqrt{n}})]=\sup\limits_{\lambda\in [\underline{\sigma},\overline{\sigma}]}E[\varphi(s+\lambda\frac{X_{1}}{\sqrt{n}})]=\mathbb{E}[\varphi(s+\frac{\xi_{1}}{\sqrt{n}})].\]
Here, we are not sure about the measurability of the function $\lambda^{\varphi,n}(s)$. Therefore, we replace it by measurable approximations. Write $\Phi(s,t, X_1)=\varphi(s+\lambda^{\varphi,n}(t)\frac{X_{1}}{\sqrt{n}})$. For any two real numbers $s,t$, we have
\begin {eqnarray*}& &E[\Phi(s,s, X_1)]\\
&=&E[\Phi(t,s, X_1)]
   +\big(E[\Phi(s,s, X_1)]-E[\Phi(t,s, X_1)]\big)\\
&\le&E[\Phi(t,t, X_1)]+ L^\varphi|t-s|\\
&=&E[\Phi(s,t, X_1)]
   +\big(E[\Phi(t,t, X_1)]-E[\Phi(s,t, X_1)]\big)+L^\varphi|t-s|\\
&\le&E[\Phi(s,t, X_1)]+2L^\varphi|t-s|,
\end {eqnarray*} where $L^\varphi$ is the Lipschitz constant of the function $\varphi$.

For any $\epsilon>0$, set $\delta=\frac{\epsilon}{2L^\varphi}$ and
\[\lambda^{\varphi,n}_\epsilon(s)=\sum_{k\in\mathbb{Z}}\lambda^{\varphi,n}(k\delta)1_{(k\delta,(k+1)\delta]}(s).\]
Then, for any $s\in \mathbb{R}$,
\[E[\varphi(s+\lambda^{\varphi,n}_\epsilon(s)\frac{X_{1}}{\sqrt{n}})]\ge\mathbb{E}[\varphi(s+\frac{\xi_{1}}{\sqrt{n}})]-\epsilon.\]
For any $\sigma\in\Sigma^{\mathbb{N}}_G$ with $\sigma_{i+1}(s)=\lambda^{\varphi,n}_\epsilon(s)$, we have
\[E[\varphi(W^{\sigma}_{i+1,n})]=E[E[\varphi(s+\sigma_{i+1}(s)\frac{X_{i+1}}{\sqrt{n}})]\big|_{s=W^{\sigma}_{i,n}}]\ge E[\mathbb{E}[\varphi(s+\frac{\xi_{i+1}}{\sqrt{n}})]\big|_{s=W^{\sigma}_{i,n}}]-\epsilon.\]
Therefore,  \[\mathbb{E}[\varphi(W_{i+1,n})]\ge\mathbb{E}[\mathbb{E}[\varphi(s+\frac{\xi_{i+1}}{\sqrt{n}})]\big|_{s=W_{i,n}}].\]
Combining the above arguments, we prove equality (\ref{se2}).

Let $\tilde{\xi}_1,\cdots, \tilde{\xi}_n$ be i.i.d random variables under a sublinear expectation $\tilde{\mathbb{E}}$ with $\tilde{\xi}\sim \mathcal{N}$, the distribution of $\xi_1$. On the basis of (\ref{se2}), we have, for any $\varphi\in lip(\mathbb{R})$,
\[\mathbb{E}[\varphi(W_n)]=\tilde{\mathbb{E}}[\varphi(\frac{\tilde{\xi}_1+\cdots+\tilde{\xi}_n}{\sqrt{n}})].\] Therefore, by using Theorem 4.5 of \cite{So17}, we obtain the desired estimate.
\end {proof}

\begin {proof} [Proof of Theorem \ref{t10}] Without loss of generality, we shall only consider $\varphi$ that vanishes at infinity.
Let $u$ be the solution to the $G$-heat equation with initial value $\varphi$. Set $\sigma_\varphi(t,x)=2G(\textmd{sgn}[\partial_{xx}^2u(1-t,x)])$, $(t,x)\in [0,1)\times\mathbb{R}$, where
\begin {equation*}
\textmd{sgn}[a]=
\begin{cases}
1,  & \mbox{if }a\ge0; \\
-1, & \mbox{if }a<0.
\end{cases}
\end {equation*}
Then, $u$ satisfies
\begin {eqnarray*}
\partial_t u-\frac{1}{2}\widetilde{\sigma}_\varphi^2\partial^2_{xx} u&=&0, \ (t,x)\in (0,1]\times\mathbb{R},\\
                        u(0,x)&=& \varphi (x).
\end {eqnarray*}
By the mollification procedure, we can find $\{\sigma_n\}\subset\Sigma_G$ such that $\|\sigma_n-\sigma_\varphi\|_{L^2([0,1]\times \mathbf{B}(R))}\rightarrow0$ as $n\rightarrow\infty$ for any $R<\infty$. Next, set $v_n(t,x):=E[\varphi(W^{\sigma_n,x}_t)]$. Then, $v_n$ is the solution to the following equation:
\begin {eqnarray*}
\partial_t v_n-\frac{1}{2}\widetilde{\sigma}_n^2\partial^2_{xx} v_n&=&0, \ (t,x)\in (0,1]\times\mathbb{R},\\
                        v_n(0,x)&=& \varphi (x).
\end {eqnarray*}
As $\varphi$ vanishes at infinity, \[\mathbf{M}(R):=\mathop{\max_{|x|\ge R;}}_{1\ge t\ge 0}\big\{|u(t,x)|, |v_n(t,x)|: \ n\in\mathbb{N} \big\}\]
approaches zero as $R$ approaches $+\infty$. Also, we have  \[\mathbf{m}(\epsilon):=\max_{(t,x)\in [0,\epsilon]\times\mathbb{R}}\big\{|u(t,x)-\varphi(x)|, |v_n(t,x)-\varphi(x)|: \ n\in\mathbb{N} \big\}\]
goes to zero as $\epsilon$ goes to $0$. Set $w_n=u-v_n$ and $\varepsilon_n=\widetilde{\sigma}_n^2-\widetilde{\sigma}_\varphi^2$. Then, $w_n$, which is nonnegative, satisfies
\begin {eqnarray*}
\partial_t w_n-\frac{1}{2}\widetilde{\sigma}_n^2\partial^2_{xx} w_n&=&\frac{1}{2}\varepsilon_n\partial_{xx}^2u, \ (t,x)\in (0,1]\times\mathbb{R},\\
                        w_n(0,x)&=&0.
\end {eqnarray*}
According to the Aleksandrov-Bakel'man-Pucci-Krylov maximum principle (see, for instance, Theorem 7.1 of \cite {Lie}),
\[\sup\limits_{(t,x)\in (\epsilon,1]\times\mathbf{B}(R)}w_n\le 2\mathbf{M}(R)+2\mathbf{m}(\epsilon)+c_0 (\frac{R}{\underline{\sigma}})^{1/2}\|\varepsilon_n\partial_{xx}^2u\|_{L^2([\epsilon,1]\times \mathbf{B}(R))},\]
where $c_0$ is a universal constant. Note that, following the interior regularity of $G$-heat equation,
\[\|\varepsilon_n\partial_{xx}^2u\|_{L^2([\epsilon,1]\times \mathbf{B}(R))}\le 2\overline{\sigma}\|\partial_{xx}^2u\|_{\infty;[\epsilon,1]\times\mathbb{R}}\|\sigma_n-\sigma_\varphi\|_{L^2([0,1]\times \mathbf{B}(R))}\rightarrow0\] as $n$ approaches $+\infty$. Thus,
\[\mathbf{O}(R,\epsilon):=\limsup_{n\rightarrow\infty}\Big(\sup\limits_{(t,x)\in (\epsilon,1]\times\mathbf{B}(R)}w_n\Big)\le 2(\mathbf{M}(R)+\mathbf{m}(\epsilon))\]
and
\[\mathbf{O}(R,\epsilon)\le\lim_{R\rightarrow\infty, \epsilon\rightarrow0}\mathbf{O}(R,\epsilon)\le\lim_{R\rightarrow\infty, \epsilon\rightarrow0} 2(\mathbf{M}(R)+\mathbf{m}(\epsilon))\le0.\]
In particular, we have \[\mathcal{N}_G[\varphi]=u(0,1)=\lim_{n\rightarrow\infty}v_n(0,1)=\lim_{n\rightarrow\infty}E[\varphi(W_1^{\sigma_n})].\]
\end {proof}

\section*{Acknowledgements}

We thank the two anonymous referees for their detailed comments which led to many improvements.
Fang X. was partially supported by a CUHK start-up grant.
Peng S. was supported by NSF (No. 11626247). 
Shao Q. M. was partially supported by Hong Kong RGC GRF 14302515 and 14304917.
Song Y. was supported by NCMIS, NSFC (No. 11688101) and Key Research Program of Frontier Sciences, CAS (No. QYZDB-SSW-SYS017).

\end{document}